\numberwithin{equation}{section}
\numberwithin{figure}{section}
\theoremstyle{plain}
\newtheorem{thm}{\protect\theoremname}
  \theoremstyle{definition}
  \newtheorem{defn}[thm]{\protect\definitionname}
  \theoremstyle{plain}
  \newtheorem{fact}[thm]{\protect\factname}
  \theoremstyle{plain}
  \newtheorem{cor}[thm]{\protect\corollaryname}
  \theoremstyle{plain}
  \newtheorem{lem}[thm]{\protect\lemmaname}
   \theoremstyle{plain}
  \newtheorem{problem}[thm]{\protect\problemname}
\newenvironment{lyxlist}[1]
{\begin{list}{}
{\settowidth{\labelwidth}{#1}
 \setlength{\leftmargin}{\labelwidth}
 \addtolength{\leftmargin}{\labelsep}
 }}
{\end{list}}
  \theoremstyle{remark}
  \newtheorem{claim}[thm]{\protect\claimname}
  \theoremstyle{plain}
  \newtheorem{prop}[thm]{\protect\propositionname}
  \theoremstyle{remark}
  \newtheorem{rem}[thm]{\protect\remarkname}
  \theoremstyle{remark}
  \newtheorem*{claim*}{\protect\claimname}
  \providecommand{\claimname}{Claim}
  \providecommand{\corollaryname}{Corollary}
  \providecommand{\definitionname}{Definition}
  \providecommand{\factname}{Fact}
  \providecommand{\lemmaname}{Lemma}
  \providecommand{\propositionname}{Proposition}
  \providecommand{\remarkname}{Remark}
\providecommand{\theoremname}{Theorem}
\providecommand{\problemname}{Problem}
\def\Ind#1#2{#1\setbox0=\hbox{$#1x$}\kern\wd0\hbox to 0pt{\hss$#1\mid$\hss}
\lower.9\ht0\hbox to 0pt{\hss$#1\smile$\hss}\kern\wd0}
\def\Notind#1#2{#1\setbox0=\hbox{$#1x$}\kern\wd0\hbox to 0pt{\mathchardef
\nn="3236\hss$#1\nn$\kern1.4\wd0\hss}\hbox to 0pt{\hss$#1\mid$\hss}\lower.9\ht0
\hbox to 0pt{\hss$#1\smile$\hss}\kern\wd0}
\def\indi{\mathop{\mathpalette\Ind{}}}
\global\long\def\acl{\operatorname{acl}}
\global\long\def\inp{\operatorname{inp}}
\global\long\def\ind{\operatorname{\indi}}
\global\long\def\M{\operatorname{\mathbb{M}}}
\global\long\def\NTP{\operatorname{NTP}}
\global\long\def\NIP{\operatorname{NIP}}
\global\long\def\bdn{\operatorname{bdn}}
\global\long\def\dcl{\operatorname{dcl}}
\global\long\def\acl{\operatorname{acl}}
\global\long\def\rv{\operatorname{rv}}
\global\long\def\RV{\operatorname{RV}}
\global\long\def\ch{\operatorname{char}}
\global\long\def\ac{\operatorname{ac}}
\global\long\def\val{\operatorname{val}}
\global\long\def\WD{\operatorname{WD}}
\title{Henselian valued fields and $\inp$-minimality}
\author{Artem Chernikov   and Pierre Simon}
\begin{document}
\maketitle

\begin{abstract}

We prove that every ultraproduct of $p$-adics is $\inp$-minimal (i.e., of burden $1$). More generally, we prove an Ax-Kochen type result on preservation of $\inp$-minimality for Henselian valued fields of equicharacteristic $0$ in the $\RV$ language.
\end{abstract}

\section{Introduction}

In his work on the classification of first-order theories \cite{MR1083551}
Shelah has introduced a hierarchy of combinatorial properties of families
of definable sets, so called dividing lines, which includes stable
theories, simple theories, $\NIP$, NSOP, etc. An important line of
research in model theory is to characterize various algebraic structures
depending on their place in this classification hierarchy (this knowledge
can later be used to analyze various algebraic objects definable in
such structures using methods of generalized stability theory). Here
we will be concerned with valued fields and Ax-Kochen-type statements,
i.e. statements of the form ``a certain property of the valued field
can be determined by looking just at the value group and the residue
field''. For example, a classical theorem of Delon \cite{delon1978types}
shows that given a Henselian valued field of equicharacteristic $0$,
if the residue field is NIP, then the whole valued field is NIP. More
recent results of similar type are \cite{belair1999types} demonstrating
preservation of NIP for certain valued fields of positive characteristic,
\cite{MR3273451} demonstrating that the field of $p$-adics
is strongly dependent, and \cite{dolich2011dp} demonstrating that
it is in fact dp-minimal.

A motivating example for this article is to determine the model-theoretic
complexity of the theory of an ultraproduct of the fields of $p$-adics
$\mathbb{Q}_{p}$ modulo a non-principal ultrafilter on the set of
prime numbers. Namely, let $K=\prod\mathbb{Q}_{p}/\mathcal{U}$, where
$\mathcal{U}$ is a non-principal ultrafilter on the set of prime
numbers. Note that the residue field $k$ is a pseudo-finite field
of characteristic $0$ and that the value group $\Gamma$ is a $\mathbb{Z}$-group.
Besides, both $k$ and $\Gamma$ are interpretable in $K$ in the
pure ring language (e.g. by a result of Ax \cite{ax1965undecidability}).
This implies that the theory of $K$ is neither $\NIP$, nor simple
--- the two classes of structures extensively studied in model theory.
However it turns out that any ultraproduct of $p$-adics is $\NTP_{2}$
\cite{chernikov2014theories}. The class of $\NTP_{2}$ theories was
introduced by Shelah \cite[Chapter III]{MR1083551} and generalizes
both simple and $\NIP$ theories. We recall the definition.
\begin{defn}
Let $T$ be a complete first-order theory in a language $L$, and
let $\M\models T$ be a monster model. Let $\kappa$ be a cardinal
(finite or infinite).
\begin{enumerate}
\item An \emph{$\inp$-pattern} of depth $\kappa$ is given by $\left(\phi_{i}\left(x,y_{i}\right),\bar{a}_{i},k_{i}:i\in\kappa\right)$,
where $\phi_{i}\left(x,y_{i}\right)$ are $L$-formulas with a fixed
tuple of free variables $x$ and a varying tuple of parameter variables
$y_{i}$, $\bar{a}_{i}=\left(a_{i,j}:j\in\omega\right)$ are sequences
of tuples of elements from $\M$, and $k_{i}$ are natural numbers
such that:

\begin{enumerate}
\item For every $i\in\kappa$, the set $\left\{ \phi_{i}\left(x,a_{i,j}\right)\right\} _{j\in\omega}$
is $k_{i}$-inconsistent (i.e. no subset of size $\geq k_{i}$ is
consistent).
\item For every $f:\kappa\to\omega$, the set $\left\{ \phi_{i}\left(x,a_{i,f\left(i\right)}\right)\right\} _{i\in\kappa}$
is consistent.
\end{enumerate}
\item $T$ is $\NTP_{2}$ if there is a (cardinal) bound on the depths of
$\inp$-patterns.
\end{enumerate}
\end{defn}
Other algebraic examples of $\NTP_{2}$ structures were identified
recently, including bounded pseudo real closed and pseudo $p$-adically
closed fields \cite{montenegro2014pseudo}, certain model complete
multi-valued fields \cite{johnson2016fun} and certain valued
difference fields, e.g. the theory $\mbox{VFA}_{0}$ of a non-standard
Frobenius on an algebraically closed valued field of characteristic
zero \cite{chernikov2014valued}. See also \cite{chernikov2015groups} and \cite{hempel2015groups} 
for some general results about groups and fields definable in $\NTP_{2}$
structures.\\

The notion of \emph{burden} was introduced by Adler \cite{adler2007strong}
based on Shelah's cardinal invariant $\kappa_{\inp}$ and provides
a quantitative refinement of $\NTP_{2}$. In the special case of simple
theories burden corresponds to preweight, and in the case of $\NIP$
theories to dp-rank (e.g. see \cite[Section 3]{chernikov2014theories}
for the details and references).
\begin{defn}

\begin{enumerate}
\item $T$ is \emph{strong} if there are no $\inp$-patterns of infinite
depth.
\item $T$ is \emph{of finite burden} if there are no $\inp$-patterns of
arbitrary large finite depth, with $x$ a singleton.
\item $T$ is \emph{$\inp$-minimal} if there is no $\inp$-pattern of depth
$2$, with $x$ a singleton.
\end{enumerate}
\end{defn}
Note that $\inp$-minimality implies finite burden implies strong (the last implication uses submultiplicativity of burden from \cite{chernikov2014theories}). All the examples mentioned above have been demonstrated to be strong
of finite burden, with the exception of $\mbox{VFA}_{0}$:
it remains open if $\mbox{VFA}_{0}$ is strong, see \cite[Question 5.2]{chernikov2014valued}. Some results about strong groups and fields can be found in \cite[Section 4]{chernikov2015groups} and \cite{dolich2015strong}.

Returning to ultraproducts of $p$-adics, we have the following
more general result.
\begin{fact}
\label{fac: AxKochenNTP2}\cite{chernikov2014theories} Let $\bar{K}=\left(K,k,\Gamma,\val,\ac\right)$
be a Henselian valued field of equicharacteristic $0$, considered
as a three-sorted structure in the Denef-Pas language $L_{\ac}$ (i.e.
there is a sort $K$ for the field itself, as well as sorts $k$ for
the residue field and $\Gamma$ for the value group, together with
the maps $v:K\to\Gamma$ for the valuation and $\ac:K\to k$ for an
angular component).
\begin{enumerate}
\item If $k$ is $\NTP_{2}$, then $\bar{K}$ is $\NTP_{2}$.
\item If both $k$ and $\Gamma$ are strong (of finite burden) then $\bar{K}$
is strong (respectively, of finite burden).
\end{enumerate}
\end{fact}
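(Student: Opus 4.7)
The plan is to use Pas's relative quantifier elimination for Henselian valued fields of equicharacteristic zero in the Denef--Pas language, according to which every $L_{\ac}$-formula $\phi(x)$ with $x$ a tuple from the field sort $K$ is equivalent to a Boolean combination of formulas of the shape
\[
\chi\bigl(\val(p_{1}(x)), \ldots, \val(p_{s}(x));\, \ac(q_{1}(x)), \ldots, \ac(q_{t}(x))\bigr),
\]
where the $p_{i}, q_{j}$ are polynomials and $\chi$ is a formula on the disjoint union of the $\Gamma$-sort and the $k$-sort. A key consequence is orthogonality: every definable subset of $k^{a}\times\Gamma^{b}$ is a finite Boolean combination of rectangles, and both $k$ and $\Gamma$ are stably embedded. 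Combined with the existence of a section of $\ac\times\val \colon K^{\times}\to k^{\times}\times\Gamma$, this yields an ``independence'' of the $k$- and $\Gamma$-traces of a single field element.

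Now let $\pi = (\phi_{i}(x,y_{i}), \bar{a}_{i}, k_{i} : i < \kappa)$ be an $\inp$-pattern in $\bar{K}$ with $x$ a tuple from $K$; I want to bound $\kappa$. I would first extract mutually indiscernible rows via Ramsey and compactness. Then, using Pas QE together with further Ramsey along each row to stabilize the ``shape'' of the Boolean combination, I would pass to subsequences so that each $\phi_{i}(x, a_{i,j})$ becomes a conjunction $\alpha_{i,j}(x) \wedge \beta_{i,j}(x)$, where $\alpha_{i,j}$ is a pure $\val$-formula on polynomials in $x$ and $\beta_{i,j}$ is a pure $\ac$-formula on polynomials in $x$. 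The next claim is that for each row $i$ the $k_{i}$-inconsistency must already be witnessed by either the $\alpha$-side or the $\beta$-side alone: otherwise the section property of $\ac\times\val$ together with orthogonality would let me assemble a common witness in $K$ from partial witnesses in $k$ and $\Gamma$, contradicting inconsistency of the row.

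Splitting the rows of $\pi$ into $\Gamma$-witnessed and $k$-witnessed parts of depths $\kappa_{1}$ and $\kappa_{2}$ respectively and pushing forward via $x\mapsto\val(\vec{p}(x))$ and $x\mapsto\ac(\vec{q}(x))$, I would obtain $\inp$-patterns of those depths in $\Gamma$ and in $k$. Since $\Gamma$ is a pure ordered abelian group it is $\NIP$ of finite dp-rank, so $\kappa_{1}$ is absolutely bounded (and at most $\bdn(\Gamma)$ in the singleton case). Under the hypothesis of (1) or (2), $\kappa_{2}$ is correspondingly bounded. The total bound $\kappa \le \kappa_{1}+\kappa_{2}$ yields $\NTP_{2}$ in part (1), and subadditivity of burden across sorts handles strongness and finite burden in part (2).

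The main obstacle will be the ``separation'' step: reducing a polynomial-coupled Boolean combination of $\val$- and $\ac$-atomic statements to a product of a pure $\Gamma$-formula and a pure $k$-formula while preserving the inconsistency structure of each row. Rectangularity of definable subsets of $k\times\Gamma$ suffices for the image in the two residue sorts, but one must also argue that simultaneous consistency of the $\alpha$- and $\beta$-subrows forces joint consistency of the row in the field sort; this requires the angular component, equicharacteristic zero, and Hensel's lemma to realize a prescribed pair of residue and valuation data for several polynomial expressions in the same $x\in K$. This genericity-in-$K$ argument is the real content beyond classical Ax--Kochen and constitutes the extension of Delon's $\NIP$ theorem to the $\NTP_{2}$ / burden setting.
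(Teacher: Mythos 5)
First, note that the paper itself offers no proof of this statement: it is quoted as a Fact from \cite{chernikov2014theories}, so your proposal can only be measured against the argument given there (and against correctness in its own right).

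As written, your outline has a genuine gap, and it is exactly the step you yourself defer as ``the main obstacle'': the claim that, after Pas quantifier elimination and orthogonality of $k$ and $\Gamma$, the $k_i$-inconsistency of each row must already be witnessed by the pure $\val$-side or the pure $\ac$-side alone --- equivalently, that separate consistency of the $\Gamma$-subrow and the $k$-subrow can be assembled into a single witness in $K$. The section of $\ac\times\val$ only lets you prescribe the pair $\left(\ac(x),\val(x)\right)$ of one element; your rows constrain $\val(p_i(x))$ and $\ac(q_j(x))$ for several polynomials in the \emph{same} $x$, and these data are strongly correlated (already for linear terms: $\val(x-a),\val(x-b),\val(a-b)$ obey the ultrametric constraints, and $\ac(x-a)=\ac(x-b)$ whenever $\val(x-a)<\val(a-b)$), so Hensel's lemma and equicharacteristic $0$ do not allow independent prescription of such data. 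The same compatibility problem blocks your pushforward step: a tuple in $\Gamma^{s}\times k^{t}$ realizing a path of the separated pattern need not arise as $\left(\val(\vec{p}(d)),\ac(\vec{q}(d))\right)$ for any $d\in K$, so one does not directly obtain $\inp$-patterns in $\Gamma$ and $k$ of depths summing to $\kappa$. Indeed, the additive bound $\kappa\le\kappa_1+\kappa_2$ your separation would yield is stronger than what the actual proof achieves --- as the introduction here remarks, the bound from \cite{chernikov2014theories} goes through a Ramsey number and is ``far from optimal'' --- which is further evidence that the clean separation is not available for free. The genuine content consists in analyzing mutually indiscernible rows of field parameters directly (the pseudo-convergent versus ``fan'' trichotomy and the cutting lemma, as in Lemmas \ref{lem: pseudo-conv or fan} and \ref{lem: cutting a pseudo-convergent sequence} of this paper), modifying the formulas row by row before transferring them to the residue field and value group; your proposal asserts the conclusion of that analysis rather than supplying it. Two smaller points: restricting to tuples from the field sort requires the reduction of $\NTP_{2}$ to singletons and submultiplicativity of burden (themselves results of the cited paper), and an ordered abelian group, while always NIP, need not have finite dp-rank --- harmless for (1), but it is precisely why (2) must assume $\Gamma$ strong (of finite burden).
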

Any pseudofinite field is supersimple of SU-rank $1$, so in particular
is $\inp$-minimal. Any ordered $\mathbb{Z}$-group is dp-minimal,
so in particular is $\inp$-minimal. It follows that any ultraproduct
of $p$-adics is strong, of finite burden. However, Fact \ref{fac: AxKochenNTP2}(2)
gives a finite bound on the burden of $\bar{K}$ in terms of the burdens
of $k$ and $\Gamma$ via a certain Ramsey number, and is far from
optimal in general. It was conjectured in \cite[Problem 7.13]{chernikov2014theories}
that all ultraproducts of $p$-adics in the pure ring language are
$\inp$-minimal (note that in the Denef-Pas language, no valued field
with an infinite residue field can be $\inp$-minimal as $\left\{ \ac\left(x\right)=a_{i}\right\} ,\left\{ \val\left(x\right)=v_{i}\right\} $
with $\left(a_{i}\right),\left(v_{i}\right)$ pairwise different give
an $\inp$-pattern of depth $2$).

In this paper we establish an Ax-Kochen type result for
$\inp$-minimality in the $\RV$ language for valued fields, in particular
confirming that conjecture.
\begin{thm}
\label{thm: main}Let $\bar{K}=\left(K,\RV,\rv\right)$ be a Henselian
valued field of equicharacteristic $0$, viewed as a structure in
the $\RV$-language (see Section \ref{sec: Reduction to RV}). 
Assume that both the residue field $k$
and the value group $\Gamma$ are $\inp$-minimal, and that moreover $k^\times / (k^\times)^p$ is finite for all prime $p$. Then $\bar{K}$ is $\inp$-minimal.\end{thm}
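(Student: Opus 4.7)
The plan is to combine relative quantifier elimination for equicharacteristic-$0$ Henselian valued fields in the $\RV$-language with a root-by-root analysis of formulas in the field sort, reducing any hypothetical $\inp$-pattern of depth $2$ on a singleton variable to a pattern where inp-minimality of $\Gamma$ and of $k$ applies. Since $x$ is a singleton from some sort of $\bar{K}$, the case $x\in\RV$ reduces directly to showing $\RV$ is $\inp$-minimal as a multi-sorted structure (handled below), and the cases $x\in k$, $x\in\Gamma$ follow at once from the hypotheses; so the interesting case is $x\in K$. There, the relative quantifier elimination writes each $\phi_i(x,a_{i,j})$, up to finite Boolean combinations, as $\rv(P_{i,j}(x))\in X_{i,j}$ with $P_{i,j}\in K[t]$ and $X_{i,j}\subseteq\RV$ definable. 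Passing to mutually indiscernible arrays $(\bar{a}_1,\bar{a}_2)$, I may assume each $P_{i,j}$ has constant degree $d_i$ and that the indexed families of roots (in a finite algebraic extension of $K$) are themselves indiscernible.

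The second step is the reduction from polynomials to linear factors. Factoring $P_{i,j}(x)=c_{i,j}\prod_\ell(x-r_{i,j,\ell})$ over the algebraic closure, the identity $\rv(P_{i,j}(x))=\rv(c_{i,j})\prod_\ell\rv(x-r_{i,j,\ell})$ in $\RV^\times$ decomposes the formula into contributions from each root. The hypothesis that $k^\times/(k^\times)^p$ is finite for every prime $p$ is what allows me to see that, along the indiscernible sequence, the contributions of the ``non-active'' roots either stabilize or vary in a controlled way, since finiteness of $p$-th power classes prevents an indiscernible sequence of products in $\RV^\times$ from generating genuinely new $\RV$-data. After refining, this reduces each row of the pattern to formulas of the form $\rv(x-a_{i,j})\in Y_{i,j}$ with $a_{i,j}\in K$, at which point the ultrametric triangle inequality (together with the classical swiss-cheese description of one-variable definable sets) collapses the depth-$2$ data about a common realizer $x$ into data about the elements $\rv(x-a_{i,1})\in\RV$.

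This delivers the problem to inp-minimality of $\RV$, which is organized by the short exact sequence $1\to k^\times\to\RV^\times\to\Gamma\to 0$. Given a depth-$2$ pattern in $\RV$, I would project both rows to $\Gamma$: if the projection is still a depth-$2$ pattern, inp-minimality of $\Gamma$ closes the case; otherwise, after extraction, one row lies entirely in a single $\Gamma$-fibre, which is a coset of $k^\times$, and inp-minimality of $k$ yields the contradiction. The main technical obstacle I expect is the polynomial-to-linear reduction: the $\RV$-sequence does not split, so $\rv(P(x))$ mixes $k$- and $\Gamma$-information across the $d_i$ roots in a subtle way, and controlling this mixing uniformly in $j$ so as to simultaneously preserve both $k_i$-inconsistency of rows and consistency of paths is exactly where the finiteness of $k^\times/(k^\times)^p$ has to do real work, by bounding the multiplicative classes that the leading coefficients and root-differences of the $P_{i,j}$ can occupy along an indiscernible subarray.
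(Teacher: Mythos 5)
Your overall architecture matches the paper's (reduce $\inp$-minimality of the field to $\inp$-minimality of $\RV$, then use the exact sequence $1\to k^\times\to\RV\to\Gamma\to 0$ to reduce to $k$ and $\Gamma$), but two of the three steps are not actually carried out, and the role you assign to the hypothesis that $k^\times/(k^\times)^p$ is finite is misplaced. First, the polynomial-to-linear reduction you single out as the main obstacle is not where the difficulty lies: in equicharacteristic $0$ the relative quantifier elimination (Flenner's cell decomposition, \cite[Proposition 5.1]{flenner2011relative}) already presents every definable $S\subseteq K$ as $\left\{x:\left(\rv(x-\alpha_1),\ldots,\rv(x-\alpha_k)\right)\in D\right\}$ with \emph{linear} terms, so no factoring over the algebraic closure is needed (and if you did factor, the roots and their $\rv$-values live outside $K$ and outside the $\RV$ sort of $K$, so the identity you write does not stay in the structure). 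Moreover, finiteness of $k^\times/(k^\times)^p$ has nothing to do with ``stabilizing non-active roots''; in the actual argument it is used only to make $\RV/n\RV$ finite, which feeds into the quantifier elimination and the treatment of divisibility predicates in the $\RV$ step. The genuine difficulty in the field-to-$\RV$ reduction, which ``ultrametric inequality plus swiss cheese'' does not address, is that the two rows of the pattern carry \emph{different} sequences of centers $a_i$ and $a'_i$, and one must replace the field variable $x$ by a single $\RV$-variable controlling $\rv(x-a_i)$ and $\rv(x-a'_j)$ simultaneously while preserving both inconsistency of rows and consistency of the column. This forces the case analysis on the mutual configuration of the two center sequences (fan versus pseudo-convergent, the cutting lemma for a pseudo-convergent sequence, comparison of $\val(d-a_i)$, $\val(d-a'_j)$ and $\val(a_i-a'_j)$, with translations by $\rv(a_\infty-a_i)$ and well-definedness conditions on the partial addition), and also a preliminary induction eliminating all but one center from each formula. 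None of this is present in your outline.

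The final step is also not a proof as stated. A depth-$2$ pattern in $\RV$ does not ``project'' to $\Gamma$: the formulas are not functions of $\val_{\rv}(x)$ alone, their push-forwards need not have inconsistent rows, and the failure of such a projection to be a pattern in no way forces one row to concentrate on a single fibre of $\val_{\rv}$ (a coset of $k^\times$). Since the extension need not split, one needs a normal form for $\RV$-formulas: the paper proves quantifier elimination for the two-sorted structure $(G,K,H)$ with $G=\RV$, $K=k^\times$, $H=\Gamma$, divisibility predicates $P_n$ and constants naming representatives of the finitely many classes of $G/nG$ --- this is exactly where the hypothesis $k^\times/(k^\times)^p$ finite (together with $\Gamma/n\Gamma$ finite, which follows from $\inp$-minimality of $\Gamma$) is used --- then deduces stable embeddedness of $k$ and $\Gamma$, and finally analyzes a realizer $b$ according to whether it lies in $K$, in $H$, or in neither, using the finiteness of $G/nG$, finiteness of $n$-torsion, and the $P_l(x-g)$ conditions to produce infinitely many solutions of one row and contradict $k_*$-inconsistency. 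Your two-case dichotomy skips all of this, so the argument has a gap precisely at the point where the splitting-free interaction of $k$ and $\Gamma$ inside $\RV$ (and the finiteness hypothesis) must do the work.
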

\begin{cor}
Any ultraproduct of $p$-adics is $\inp$-minimal.
\end{cor}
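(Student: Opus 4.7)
The starting point is the relative quantifier elimination for Henselian valued fields of equicharacteristic zero in the $\RV$-language (in the style of Basarab--Kuhlmann and Flenner): every formula $\phi(x,\bar b)$ with $x$ a single variable in the $K$-sort is equivalent to a boolean combination of formulas of the form $\rv(f(x)) \in R$, where $f(x) \in K[x]$ is a polynomial in the parameters and $R \subseteq \RV$ is an $\RV$-definable set. My plan is to use this to reduce the question about $\inp$-patterns in $\bar K$ with $|x|=1$ to a combinatorial statement about ``disks'' of the form $\{x : \rv(x-\alpha) \in S\}$ with $\alpha$ algebraic over the parameters, and then derive a contradiction from the $\inp$-minimality of $k$ and $\Gamma$ together with the hypothesis that $k^\times/(k^\times)^p$ is finite for every prime $p$.

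Suppose toward contradiction that $\bar K$ admits an $\inp$-pattern of depth $2$ with $x$ a singleton in $K$, witnessed by $(\phi_i(x,y_i),(a_{i,j})_{j\in\omega},k_i)$ for $i=1,2$. By the usual Ramsey/extraction arguments I may assume the array $(a_{i,j})$ is mutually indiscernible. Applying the relative QE, each $\phi_i(x,a_{i,j})$ becomes a boolean combination of conditions $\rv(f_{i,j,s}(x)) \in R_{i,j,s}$. Working over the algebraic closure of the parameters and extracting indiscernibles once more, the polynomials $f_{i,j,s}$ split as products of linear factors, so the whole pattern is controlled by conditions $\rv(x - \alpha_{i,j,s}) \in R'_{i,j,s}$ with $\alpha_{i,j,s} \in \acl(a_{i,j})$. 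Since a single variable in $K$ is ``one-dimensional'', the resulting definable set on row $i$ should be a finite union of Holly-style Swiss cheeses (a ball minus finitely many sub-balls), with the radii and residues read off in $\RV$.

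The heart of the proof is the transfer step: from a depth-$2$ $\inp$-pattern in $K$ cut out by Swiss cheeses one extracts a depth-$2$ $\inp$-pattern in $\RV$, which must then contradict the $\inp$-minimality of $\RV$. The latter, in turn, should follow from the $\inp$-minimality of $k$ and of $\Gamma$ via the exact sequence $1 \to k^\times \to \RV^\times \to \Gamma \to 1$: the hypothesis that $k^\times/(k^\times)^p$ is finite for every $p$ ensures that the extension is ``tame'' in the sense that the only $\RV$-definable partitions are, up to finitely many pieces, pulled back from $k$ or $\Gamma$, so no genuinely new ip-patterns are introduced. To set up the transfer, one picks (after suitable extraction) a generic realisation $x_0$ of a consistent branch, and looks at how $\rv(x_0 - \alpha_{1,j_1,s})$ and $\rv(x_0 - \alpha_{2,j_2,t})$ vary with $j_1$, resp.\ $j_2$; the tree-structure of balls in $K$ together with indiscernibility should guarantee that the two rows become essentially independent coordinates of $\RV$.

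The main obstacle, and the step requiring the most care, is precisely this transfer. A priori the two polynomial families indexed by $y_1$ and $y_2$ can have strongly correlated $\rv$-behavior -- for example, the centers $\alpha_{1,j,s}$ and $\alpha_{2,j,t}$ could all lie in a common small ball, so that $\rv(x - \alpha_{1,j,s})$ and $\rv(x - \alpha_{2,j,t})$ depend on one another in a non-trivial way -- and it is not automatic that ``$k_i$-inconsistency within row $i$'' and ``consistency across rows'' translate cleanly into a genuine depth-$2$ $\inp$-pattern in $\RV$. Pinning down the correct indiscernibility/extraction procedure to force the two rows to become independent in $\RV$, and to handle the finite ambiguity introduced by the $k^\times/(k^\times)^p$ quotients (hence the need for their finiteness), is where the technical work of the paper must go.
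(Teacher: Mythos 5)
There is a genuine gap: what you have written is a plan for re-proving the general Ax--Kochen-type theorem (Theorem \ref{thm: main}), not a proof of the corollary, and even as a plan it explicitly defers the decisive steps. You acknowledge yourself that the transfer from a depth-$2$ pattern in $K$ (cut out by conditions on $\rv(x-\alpha)$) to a depth-$2$ pattern in $\RV$, and the reduction of $\inp$-minimality of $\RV$ to that of $k$ and $\Gamma$, are ``where the technical work of the paper must go''. Those two steps are precisely the content of the paper: the first is the case analysis of Section \ref{sec: Reduction to RV} (mutually indiscernible rows, the trichotomy pseudo-convergent/reversed/fan of Lemma \ref{lem: pseudo-conv or fan}, and Lemma \ref{lem: cutting a pseudo-convergent sequence} to re-centre the pattern), and the second is not a soft ``tameness'' statement but a relative quantifier elimination for the two-sorted group $(G,K,H)$ with $G/nG$ finite (Proposition \ref{prop: Elimination of quantifiers in RV}) followed by the case analysis of Proposition \ref{prop: concluding inp-minimality}; the finiteness of $k^\times/(k^\times)^p$ enters there only to guarantee $\RV/n\RV$ is finite, together with finiteness of $\Gamma/n\Gamma$ which holds automatically for $\inp$-minimal ordered groups. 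Leaving these steps open means no proof has been given.

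Moreover, for the corollary as stated, the intended argument is much shorter and is missing entirely from your write-up: one simply checks the hypotheses of Theorem \ref{thm: main} for $K=\prod\mathbb{Q}_{p}/\mathcal{U}$. Namely, $K$ is Henselian of equicharacteristic $0$; the residue field $k$ is pseudofinite of characteristic $0$, hence supersimple of SU-rank $1$ and so $\inp$-minimal; the value group $\Gamma$ is a $\mathbb{Z}$-group, hence dp-minimal and so $\inp$-minimal; and $k^\times/(k^\times)^p$ is finite for every prime $p$ by \L{}o\'s, since each finite field has cyclic multiplicative group, so the index is at most $p$. One should also observe that $\inp$-minimality of $\bar{K}$ in the $\RV$-language yields $\inp$-minimality of $K$ in the pure ring language, as the latter is a reduct. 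None of these verifications appear in your proposal, so even granting the general theorem you have not derived the corollary.
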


Recall the following definition, see e.g. \cite{simon2011dp}.

\begin{defn} \label{def: dp-min}
A theory is \emph{dp-minimal} if for every mutually indiscernible sequences of tuples $(a_i : i \in \omega), (a'_i : i \in \omega)$ and a singleton $b$ in the home sort, one of this sequences must be indiscernible over $b$.
\end{defn}
\begin{rem} \label{rem: dp-min}
An $\NIP$ theory is \emph{dp-minimal} if and only if it is $\inp$-minimal.
\end{rem}

Johnson \cite{johnson2018canonical} shows that a dp-minimal not strongly minimal field admits a definable Henselian valuation. It follows that if $K$ is dp-minimal, then $K^\times / (K^\times)^p$ is finite for all prime $p$ (a fact which Johnson states and uses).
Combining this with Delon's result on preservation
of NIP we have the following corollary (which also appears in Johnson's thesis \cite{johnson2016fun}).
\begin{cor}
\label{cor: dp-min case} Under the same assumptions on $\bar{K}$, if both $k$ and $\Gamma $ are dp-minimal, then $\bar{K}$ is dp-minimal.
\end{cor}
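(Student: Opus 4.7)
The plan is to obtain the corollary by combining Theorem~\ref{thm: main} with Delon's preservation theorem for $\NIP$, using the equivalence recalled in the introduction that an $\NIP$ theory is dp-minimal if and only if it is $\inp$-minimal. Accordingly, I would split the task into two pieces: showing that $\bar{K}$ is $\inp$-minimal, and showing that $\bar{K}$ is $\NIP$.

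For $\inp$-minimality I would apply Theorem~\ref{thm: main} directly, so the work is to verify its hypotheses. The condition that $k$ and $\Gamma$ are $\inp$-minimal is immediate, since dp-minimality implies $\NIP$ and hence $\inp$-minimality. The nontrivial hypothesis is that $k^{\times}/(k^{\times})^{p}$ is finite for every prime $p$. For this I would cite Johnson's work on dp-minimal fields: any dp-minimal field (whether or not it is strongly minimal) has finite $k^{\times}/(k^{\times})^{p}$ for every $p$ --- a fact that the excerpt explicitly notes Johnson states and uses in his analysis of dp-minimal fields. With these two inputs Theorem~\ref{thm: main} delivers $\inp$-minimality of $\bar{K}$.

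For $\NIP$ of $\bar{K}$ I would invoke Delon's theorem: in equicharacteristic $0$, if the residue field of a Henselian valued field is $\NIP$ then so is the valued field. Since $k$ is dp-minimal it is $\NIP$, and Delon's theorem yields $\NIP$ of $\bar{K}$. The only genuine subtlety is a bookkeeping issue about languages: Delon's original formulation is not stated in the $\RV$-language of Section~\ref{sec: Reduction to RV}, but in equicharacteristic $0$ the $\RV$-sort is bi-interpretable (via the short exact sequence $1 \to k^{\times} \to \RV^{\times} \to \Gamma \to 0$) with the two-sorted residue-field-plus-value-group structure on which Delon's argument operates, so the $\NIP$ transfer passes to the $\RV$-language as well. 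I would regard this language translation as the main, though essentially routine, obstacle; all of the genuinely new content sits in Theorem~\ref{thm: main}. Combining the two pieces, $\bar{K}$ is $\NIP$ and $\inp$-minimal, hence dp-minimal.
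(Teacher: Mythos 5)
Your proposal is correct and follows essentially the same route as the paper: apply Theorem~\ref{thm: main} (using Johnson's fact that a dp-minimal field has $k^{\times}/(k^{\times})^{p}$ finite for all primes $p$) to get $\inp$-minimality, use Delon's theorem to get $\NIP$, and conclude dp-minimality from the equivalence of dp-minimality and $\inp$-minimality for $\NIP$ theories. Your extra remark on translating Delon's result to the $\RV$-language is a reasonable bookkeeping point that the paper leaves implicit.
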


There are three steps in the proof of the main theorem, corresponding
to the sections of the paper. First, we recall some facts about the
$\RV$ setting and show that the whole valued field is $\inp$-minimal
if and only if the $\RV$ sort is $\inp$-minimal. Second, we show
that the $\RV$ sort eliminates quantifiers down to the residue field
$k$ and the value group $\Gamma$. Using this quantifier
elimination, in the last section we show that the $\RV$ sort is $\inp$-minimal if and
only if both $k$ and $\Gamma$ are $\inp$-minimal. Finally, we discuss some problems and future research directions.

\section{\label{sec: Reduction to RV}Reduction to $\RV$}

We recall some basic facts about the RV setting, we are going to use \cite{flenner2011relative} as a reference. Fix a valued field $K$, with value
group $\Gamma$ and residue field $k$. Let $\RV$ be the quotient
group $K^{\times}/\left(1+\mathfrak{m}\right)$ where $\mathfrak{m}=\left\{ x\in K:\val\left(x\right)>0\right\} $
is the maximal ideal of the valuation ring. We have a short exact
sequence $1\to k^{\times}\to\RV\overset{\val_{\rv}}{\to}\Gamma\to0\mbox{.}$

Consider now the two-sorted structure $\bar{K}=\left(K,\RV,\rv\right)$
in the language $L_{\RV^{+}}$ consisting of: 
\begin{itemize}
\item the quotient map $\rv:K\to\RV$,
\item on the sort $K$, the ring structure,
\item on the sort $\RV$, the structure $\cdot,1$ of a multiplicative group,
a symbol $0$, a symbol $\infty$ and a ternary relation $\oplus$.
\\
The multiplicative group structure is interpreted as the group structure
induced from $K^{\times}$ and $0\cdot x=x\cdot0=0$, $\infty = \rv(0)$. The relation $\oplus$ is interpreted as the partially defined addition inherited from $K$: $\oplus(a,b,c) \iff \exists x,y,z \in K \left( a =\rv(x) \land b = \rv(y) \land c = \rv(z) \land x+y = z \right)$.

\end{itemize}

\begin{rem}\label{rem: WD}
\begin{enumerate}

\item One can define the set $\WD(x,y)$ of pairs of elements for which the sum is well-defined as $\forall z, z' (\oplus(x,y,z) \land \oplus(x,y,z') \implies z=z')$. Given a pair of elements $x,y \in \RV$ such that $\WD(x,y)$ holds, we write $x+y$ to denote the unique element $z \in \RV$ satisfying $\oplus(x,y,z)$. 
\item We have $\WD(\rv(a),\rv(b)) \iff \val(a+b) = \min \left\{\val(a), \val(b) \right\}$, in which case $\rv(a+b) = \rv(a) + \rv(b)$ (see \cite[Proposition 2.4]{flenner2011relative}).

\item The relation $\val_{\rv}(x) \leq \val_{\rv}(y)$  on $\RV$ is definable in this language \cite[Proposition 2.8(1)]{flenner2011relative}. Namely, let $d \in \RV$ be arbitrary with $\val_{\rv}(d) = 0$. Then  $\val_{\rv}(x) > 0 \iff dx + 1 = 1$, and $\val_{\rv}(x) = 0 \iff \neg \val_{\rv}(x) > 0 \land \exists y (x \cdot y = 1 \land \neg \val_{\rv}(y) > 0)$. Then $\val_{\rv}(x) = \val_{\rv}(y) \iff \exists u (\val_{\rv}(u) = 0 \land x = u \cdot y)$ and $\val_{\rv}(x) < \val_{\rv}(y) \iff x \neq \infty \land x + dy = x$.
\end{enumerate}
\end{rem}

Let $\bar{\mathbb{K}}\succ\bar{K}$ be a monster model. We may always
assume that $\bar{\mathbb{K}}$ admits a cross-section map $\ac:K\to k^{\times}$,
so we can view $\bar{\mathbb{K}}$ also as a structure in the language
$L_{\ac}$ with $\ac$ added to the language.
\begin{fact}
\label{fac: Flenner's cell decomposition}\cite[Proposition 5.1]{flenner2011relative}
\begin{enumerate}
\item Let $K$ be a Henselian valued field with $\ch\left(k\right)=0$,
and suppose that $S\subseteq K$ is definable. Then there are $\alpha_{1},\ldots,\alpha_{k}$
and a definable subset $D\subseteq\RV^{k}$ such that 
\[
S=\left\{ x\in K:\left(\rv\left(x-\alpha_{1}\right),\ldots,\rv\left(x-\alpha_{k}\right)\right)\in D\right\} \mbox{.}
\]

\item The $\RV$ sort is fully stably embedded (i.e. the structure on $\RV$
induced from $\bar{K}$, with parameters, is precisely the one described above).
\end{enumerate}
\end{fact}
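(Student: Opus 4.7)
The plan is to derive both parts of the fact from a relative quantifier elimination for Henselian valued fields of equicharacteristic $0$ in the language $L_{\RV^+}$: every $L_{\RV^+}$-formula $\varphi(x;\bar y)$ with $x$ a variable in the $K$-sort and $\bar y$ in the $\RV$-sort should be equivalent, modulo the theory, to a formula of the form
\[
\Psi\bigl(\rv(x-\alpha_1),\ldots,\rv(x-\alpha_k);\bar y\bigr),
\]
where $\alpha_1,\ldots,\alpha_k \in K$ are finitely many parameters definable from those of $\varphi$. This is the ``elimination of $K$-quantifiers into $\RV$'' theme going back to Basarab and developed in the $\RV$ setting by Kuhlmann.

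To establish this QE, I would proceed as follows. After clearing denominators, an atomic formula involving $x$ in the field sort reduces to a statement of the form $\rv(f(x)) \in X$ for a polynomial $f(x) \in K[x]$. By Taylor expansion around $\alpha \in K$,
\[
f(x) \;=\; \sum_{i \geq 0} \tfrac{f^{(i)}(\alpha)}{i!}\,(x-\alpha)^i,
\]
so $\rv(f(x))$ is determined by $\rv(x-\alpha)$ together with the $\rv$-images of the derivatives $f^{(i)}(\alpha)$, provided the minimum of $\val\bigl(\tfrac{f^{(i)}(\alpha)}{i!}(x-\alpha)^i\bigr)$ is uniquely attained. One must therefore choose the $\alpha_i$ among the roots of $f$, $f'$, and auxiliary related polynomials so that for every $x \in K$ some $\alpha_i$ controls $\rv(f(x))$ in this sharp sense; the availability of such roots is where equicharacteristic $0$ and Henselianity enter, via Hensel's lemma applied to the relevant approximate roots. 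The main obstacle is executing this decomposition uniformly in parameters and inductively on the degree of $f$, carefully handling high-multiplicity roots and iterated derivatives.

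Granting this QE, part (1) is immediate: if $S \subseteq K$ is cut out by $\varphi(x;\bar c)$, the QE rewrites $\varphi$ as $\Psi\bigl(\rv(x-\alpha_1),\ldots,\rv(x-\alpha_k)\bigr)$, and taking $D := \{\bar z \in \RV^k : \Psi(\bar z)\}$ gives the desired description of $S$. For part (2), suppose $D \subseteq \RV^n$ is defined by some $\varphi(\bar y;\bar c,\bar d)$ with $\bar c \in K$ and $\bar d \in \RV$. Applying the QE to every $K$-quantifier appearing in $\varphi$ eliminates $\bar c$ in favor of finitely many $\rv$-evaluations of polynomials in $\bar c$, all of which lie in $\RV$; hence $D$ is already definable with parameters from $\RV$ alone. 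Equivalently, one invokes the Galois criterion for stable embeddedness and extends any automorphism of $\RV$ (in its induced structure) to an automorphism of $\bar{\mathbb{K}}$ by the standard back-and-forth for Henselian valued fields of equicharacteristic zero.
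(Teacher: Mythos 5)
This statement is not proved in the paper at all: it is imported as a black box from Flenner \cite[Proposition 5.1]{flenner2011relative}, so there is no internal argument to compare yours against. Your sketch does follow essentially the strategy of the cited source (Taylor expansion of a polynomial around well-chosen centers, Henselianity in residue characteristic $0$, relative elimination of the field variable into $\RV$), so the route is reasonable; the problem is that what you have written is a plan rather than a proof, and the step you explicitly set aside as ``the main obstacle'' is precisely the entire mathematical content of the Fact.

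Concretely, two things are missing. First, rewriting atomic formulas as conditions on $\rv(f(x))$ and expanding $f$ around a center $\alpha$ only settles the locus where the minimum of $\val\bigl(\tfrac{f^{(i)}(\alpha)}{i!}(x-\alpha)^i\bigr)$ is attained uniquely; the real work is to produce, uniformly in the parameters and by induction on $\deg f$, finitely many centers (refined via Hensel's lemma near approximate roots of $f$ and its iterated derivatives) so that $\rv(f(x))$ is a function of $\bigl(\rv(x-\alpha_1),\ldots,\rv(x-\alpha_k)\bigr)$ on \emph{all} of $K$, including the balls around high-multiplicity roots where several Taylor terms share the minimal valuation; this is exactly the cell decomposition you are assuming. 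Second, even granting that atomic formulas factor through the $\rv(x-\alpha_i)$, part (1) concerns an arbitrary definable set, so one must eliminate quantifiers over the field sort: knowing the shape of atomic subformulas does not by itself show that an $\exists x\in K$ can be transferred to a condition evaluated inside $\RV$. Your argument for part (2) has the same circularity: the ``standard back-and-forth'' lifting automorphisms of $\RV$ to automorphisms of the valued field, and the purity statement that the induced structure is exactly the one described, are again the relative quantifier elimination in a different guise, not an independent route. A minor further point: in (1) the centers $\alpha_i$ merely exist; they need not be definable from the parameters of the formula, as your formulation suggests.
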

The following two lemmas are easy to verify (see \cite{chernikov2010indiscernible}, or the proof of \cite[Claim 1.17]{MR3273451}
for the details).
\begin{lem}
\label{lem: pseudo-conv or fan}Let $\left(a_{i}\right)_{i\in I}$
be an $L_{\ac}$-indiscernible sequence of singletons in $\mathbb{K}$,
and consider the function $\left(i,j\right)\mapsto\val\left(a_{j}-a_{i}\right)$
for $i<j\in I$. Then one of the following cases occurs:
\begin{enumerate}
\item It is strictly increasing depending only on $i$ (so the sequence
is pseudo-convergent).
\item It is strictly decreasing depending only on $j$ (so the sequence
taken in the reverse direction is pseudo-convergent).
\item It is constant (we'll refer to such a sequence as a ``fan'').
\end{enumerate}
\end{lem}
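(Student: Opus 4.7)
The plan is to use $L_{\ac}$-indiscernibility on triples to uniformize, inside $\Gamma$, the comparison between the three values $v_{i,j} := \val(a_j-a_i)$, $v_{j,k}$, and $v_{i,k}$ attached to any triple $i<j<k$ in $I$, and then combine the resulting trichotomy with the ultrametric inequality. Since $\Gamma$ is a sort of the ambient language and the $v$'s are $\emptyset$-definable from the triple, indiscernibility forces the tuple $(v_{i,j},v_{j,k},v_{i,k})\in\Gamma^3$ to have the same type as the triple varies, so in particular the signs of $v_{j,k}-v_{i,j}$ and $v_{i,k}-v_{i,j}$ are uniform. This splits into three branches according to whether $v_{i,j}<v_{j,k}$, $v_{i,j}>v_{j,k}$, or $v_{i,j}=v_{j,k}$ holds on every increasing triple.

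In the two strict branches the ultrametric equality finishes the argument. If $v_{i,j}<v_{j,k}$ uniformly, then $a_k-a_i=(a_j-a_i)+(a_k-a_j)$ has valuation exactly $\min(v_{i,j},v_{j,k})=v_{i,j}$, so $v_{i,k}=v_{i,j}$. For a fixed $i$ and any two $j,j'>i$, taking $k$ larger than both yields $v_{i,j}=v_{i,k}=v_{i,j'}$, so $v_{i,\cdot}$ depends only on $i$; writing $\gamma_i$ for this common value, the hypothesis reads $\gamma_i<\gamma_j$, which is case~(1). The symmetric branch $v_{i,j}>v_{j,k}$ yields case~(2) with $\delta_k := v_{\cdot,k}$ strictly decreasing in $k$.

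The main obstacle is the equality branch $v_{i,j}=v_{j,k}$, because the ultrametric only yields $v_{i,k}\geq v_{i,j}$ and this inequality genuinely can be strict on an individual triple (leading terms may cancel). I would rule strictness out by a four-index argument: assume for contradiction that $v_{i,k}>v_{i,j}$ holds on every increasing triple, and pick $i<j<k<l$ in $I$. The equality $v_{p,q}=v_{q,r}$ applied to the triples $(i,k,l)$ and $(j,k,l)$ gives $v_{i,k}=v_{k,l}=v_{j,k}$, while the triple $(i,j,k)$ together with the assumed strict inequality gives $v_{j,k}=v_{i,j}<v_{i,k}$, contradicting $v_{i,k}=v_{j,k}$. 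Hence $v_{i,k}=v_{i,j}$ on every triple, and combining with $v_{i,j}=v_{j,k}$ shows that $(i,j)\mapsto v_{i,j}$ is globally constant on increasing pairs, giving case~(3).
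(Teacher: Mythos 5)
Your proof is correct. The paper itself gives no argument for this lemma (it is dismissed as ``easy to verify'' with a pointer to \cite{chernikov2010indiscernible}), so there is nothing to diverge from; your write-up is exactly the kind of verification intended: since $\Gamma$ and its ordering are part of $L_{\ac}$, indiscernibility makes the comparison of $\val(a_j-a_i)$ with $\val(a_k-a_j)$ uniform over increasing triples, and the ultrametric equality $\val(x+y)=\min(\val(x),\val(y))$ for $\val(x)\neq\val(y)$ settles the two strict branches. The four-index argument ruling out $\val(a_k-a_i)>\val(a_j-a_i)$ in the equality branch is the one genuinely non-automatic point, and it is right (it needs at least four indices, which is harmless since $I$ is infinite, e.g.\ $I=\mathbb{Z}$ in the paper's application). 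Two cosmetic remarks: in branch (1) you do not need a $k$ above both $j$ and $j'$ --- the triple $(i,j,j')$ already gives $\val(a_{j'}-a_i)=\val(a_j-a_i)$, so no assumption about $I$ having no maximum is needed; and one should dispose of the degenerate constant sequence (where the differences are $0$) separately, as it trivially falls under case (3) or is excluded by convention.
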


\begin{lem}
\label{lem: cutting a pseudo-convergent sequence}Let $(a_{i})_{i\in I}$
be an $L_{\ac}$-indiscernible pseudo-convergent sequence from $\mathbb{K}$.
Then for any $d\in\mathbb{K}$ there is some $i_{*}\in\bar{I}\cup\{+\infty,-\infty\}$
(where $\bar{I}$ is the Dedekind closure of $I$) such that (taking
$a_{\infty}$ from $\mathbb{K}$ such that $I\frown a_{\infty}$ is
indiscernible):
\begin{lyxlist}{00.00.0000}
\item [{For~$i<i_{*}$:}] $\val(a_{\infty}-a_{i})<\val(d-a_{\infty})$,
$\val(d-a_{i})=\val(a_{\infty}-a_{i})$ and $\ac(d-a_{i})=\ac(a_{\infty}-a_{i})$.
\item [{For~$i>i_{*}$:}] $\val(a_{\infty}-a_{i})>\val(d-a_{\infty})$,
$\val(d-a_{i})=\val(d-a_{\infty})$ and $\ac(d-a_{i})=\ac(d-a_{\infty})$.
\end{lyxlist}
\end{lem}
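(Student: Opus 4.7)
The plan is to reduce the lemma to a single application of the ultrametric inequality once the cut $i_*$ is correctly located. I would first extend $(a_i)_{i \in I}$ by $a_\infty$ so that $I \frown a_\infty$ is $L_{\ac}$-indiscernible, which is possible by standard arguments. By Lemma~\ref{lem: pseudo-conv or fan}(1), for $i < j$ in $I \frown \{\infty\}$ the valuation $\val(a_j - a_i)$ depends only on $i$; call this common value $\gamma_i$. In particular $\gamma_i = \val(a_\infty - a_i)$ for all $i \in I$, and $i \mapsto \gamma_i$ is strictly increasing.

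Next, set $v_0 := \val(d - a_\infty)$ and let $i_* \in \bar{I} \cup \{+\infty, -\infty\}$ be the Dedekind cut separating $\{i \in I : \gamma_i < v_0\}$ from $\{i \in I : \gamma_i > v_0\}$ (with $i_* = -\infty$ or $+\infty$ if the corresponding set is empty). Then for every $i \in I$ with $i < i_*$ we get strictly $\gamma_i < v_0$, and for every $i > i_*$ strictly $\gamma_i > v_0$.

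The conclusion now follows from the decomposition
\[
d - a_i \;=\; (d - a_\infty) + (a_\infty - a_i),
\]
together with the ultrametric principle that if two elements have distinct valuations then their sum has the smaller valuation and inherits its angular component. For $i < i_*$ the second summand dominates, giving $\val(d - a_i) = \gamma_i = \val(a_\infty - a_i)$ and $\ac(d - a_i) = \ac(a_\infty - a_i)$. For $i > i_*$ the first summand dominates, giving $\val(d - a_i) = v_0 = \val(d - a_\infty)$ and $\ac(d - a_i) = \ac(d - a_\infty)$.

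There is no serious obstacle: the argument is essentially the ultrametric inequality applied along a pseudo-convergent sequence. The only mild subtlety is that an exceptional index $i_0 \in I$ with $\gamma_{i_0} = v_0$ may exist, but this is absorbed by letting $i_*$ be the cut at $i_0$ in $\bar{I}$, so that the statement (which concerns only the strict inequalities $i < i_*$ and $i > i_*$) need not make any claim about this one index.
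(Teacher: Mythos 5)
Your proof is correct and is precisely the standard argument the paper has in mind: the paper itself omits the proof of this lemma, calling it easy to verify and deferring details to the cited reference. Appending $a_\infty$ as an indiscernible continuation (so that $\val(a_\infty-a_i)=\gamma_i$ is strictly increasing), locating $i_*$ as the cut where $\gamma_i$ crosses $\val(d-a_\infty)$ (placed at the at most one index of equality, about which nothing is claimed), and applying the ultrametric and angular-component rule to the decomposition $d-a_i=(d-a_\infty)+(a_\infty-a_i)$ gives exactly both clauses of the statement.
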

\begin{rem}\label{rem: basic rv}
Note also that for any non-zero $x,y\in K$, $\rv\left(x\right)=\rv\left(y\right)$
if and only if $\val\left(x-y\right)>\val\left(y\right)$; and for
any $z\in K$ and $x,y\in K\setminus\left\{ z\right\} $, $\rv\left(x-z\right)=\rv\left(y-z\right)$
if and only if $\val\left(x-y\right)>\val\left(y-z\right)$.
\end{rem}

~

In the remainder of this section we will reduce $\inp$-minimality
of $\bar{K}$ to $\inp$-minimality of the $\RV$ sort with the induced
structure.

First we treat a key special case. Assume that there is an $\inp$-pattern
consisting of formulas $\psi\left(x,yz\right)=\phi\left(\rv\left(x-y\right),z\right)$
and $\psi'\left(x,yz'\right)=\phi'\left(\rv\left(x-y\right),z'\right)$
and \emph{mutually $L_{\ac}$-indiscernible} sequences $\left(c_{i}\right)_{i\in\mathbb{Z}},\left(c_{i}'\right)_{i\in\mathbb{Z}}$
with $c_{i}=a_{i}\widehat{\,}b_{i}$ and $c_{i}'=a_{i}'\widehat{\,}b_{i}'$
where $\phi$ and $\phi'$ are $\RV$-formulas, $b_{i}\in\RV^{\left|z\right|},b_{i}'\in\RV^{\left|z'\right|}$
and $a_{i},a_{i}'\in K$. Without loss of generality both $\left\{ \phi\left(\rv\left(x-a_{i}\right),b_{i}\right)\right\} _{i\in\mathbb{Z}}$
and $\left\{ \phi'\left(\rv\left(x-a_{i}'\right),b_{i}'\right)\right\} _{i\in\mathbb{Z}}$
are $k$-inconsistent, and let $d\models\phi\left(\rv\left(x-a_{0}\right),b_{0}\right)\land\phi'\left(\rv\left(x-a_{0}'\right),b_{0}'\right)$.
We may also add to the base elements $a_{\infty},a_{-\infty},a_{\infty}',a_{-\infty}'$
continuing our sequences on the left and on the right.
\begin{claim}
$\val\left(d-a_{i}\right)\leq\val\left(d-a_{0}'\right)$ and $\val\left(d-a_{j}'\right)\leq\val\left(d-a_{0}\right)$
for all $i$ and $j$. In particular, $\val\left(d-a_{0}\right)=\val\left(d-a_{0}'\right)=\gamma$ for some $\gamma \in \Gamma$.\end{claim}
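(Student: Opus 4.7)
I would argue by contradiction. Suppose without loss of generality that $\val(d - a_{i_0}) > \val(d - a_0') =: \alpha'$ for some $i_0$; the goal is to derive that $d$ simultaneously satisfies infinitely many formulas from one of the two rows of the inp-pattern, contradicting $k$-inconsistency.

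The starting observation uses mutual $L_{\ac}$-indiscernibility of $(c_i)$ and $(c_j')$: since $(a_i)$ is indiscernible over $(a_j')_j$ and $(a_j')$ is indiscernible over $(a_i)_i$, the valuation $\val(a_i - a_j')$ is a constant $\delta$ in $(i,j)$. The ultrametric inequality applied to $a_{i_0} - a_0' = (a_{i_0} - d) + (d - a_0')$ together with $\val(d - a_{i_0}) > \alpha'$ yields $\delta = \alpha'$; in particular $\val(a_0 - a_0') = \alpha'$, and applying the ultrametric once more to $a_0 - a_0' = (a_0 - d) + (d - a_0')$ forces $\val(d - a_0) \geq \alpha'$.

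In the subcase $\val(d - a_0) > \alpha'$, the argument closes quickly: for every $j$, dominance gives $\rv(d - a_j') = \rv(a_0 - a_j')$, so the second-row formula $\phi'(\rv(x - a_j'), b_j')$ evaluated at $d$ rewrites as $\phi'(\rv(a_0 - a_j'), b_j')$---a formula in $(a_j', b_j')$ with parameter $a_0$. By indiscernibility of $(a_j', b_j')_j$ over $(a_i)_i \supseteq \{a_0\}$, its truth value is constant in $j$; since it holds at $j = 0$, $d$ realizes infinitely many formulas from the second row, contradicting $k$-inconsistency.

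In the subcase $\val(d - a_0) = \alpha'$, I analyze $(a_i)$ via Lemma \ref{lem: pseudo-conv or fan} and Lemma \ref{lem: cutting a pseudo-convergent sequence} applied to $d$. In the pseudo-convergent case (up to reversal), the coexistence of $\val(d - a_0) = \alpha'$ and $\val(d - a_{i_0}) > \alpha'$ forces $0$ onto the ``variable side'' of the cut point $i_*$; then $\rv(d - a_0) = \rv(a_\infty - a_0)$, and indiscernibility of $(a_i, b_i)_{i < i_*}$ over $a_\infty$ propagates the truth value of $\phi(\rv(a_\infty - a_i), b_i)$ uniformly to all $i < i_*$, giving infinitely many first-row realizations by $d$---contradiction. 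The main technical obstacle is the fan case $\val(a_i - a_j) = \gamma_1$ constant, especially when $\rv(d - a_i)$ varies over a fan in $\RV$; this is handled by a Ramsey-type extraction of a subsequence of $(a_i, b_i)$ indiscernible over $\{a_0, \rv(d - a_0)\}$, after which the identity $\rv(d - a_i) = \rv(d - a_0) + \rv(a_0 - a_i)$ (valid off the at most one exceptional index where cancellation occurs) lets the truth of $\phi$ propagate uniformly. The symmetric bound $\val(d - a_j') \leq \val(d - a_0)$ follows by interchanging the roles of $(c_i)$ and $(c_j')$.
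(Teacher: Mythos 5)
There is a genuine gap at the very first step. You assert that mutual $L_{\ac}$-indiscernibility of $(c_i)$ and $(c'_j)$ forces $\val(a_i-a'_j)$ to be constant in $(i,j)$. That is false: indiscernibility of $(a_i)$ over the set $\{a'_j\}$ only forces the map $i\mapsto\val(a_i-a'_j)$ to be constant, strictly increasing, or strictly decreasing (and similarly in $j$), exactly as in Lemma \ref{lem: pseudo-conv or fan}. For instance, if $(a_i)$ is pseudo-convergent and every $a'_j$ is a pseudo-limit of it (a configuration the paper must, and does, treat later as Case B, Subcase 1), then $\val(a_i-a'_j)=\val(a_i-a_{i+1})$ is strictly increasing in $i$, while the two sequences are perfectly mutually indiscernible. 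Since the claim is proved \emph{before} and used \emph{in} that case distinction, you cannot assume constancy; your value $\delta$, the identity $\val(a_0-a'_0)=\alpha'$, and the whole subsequent subcase split rest on it. The fan-case patch at the end is also not closed as sketched: after extracting a subsequence of $(a_i,b_i)$ indiscernible over $\{a_0,\rv(d-a_0)\}$ you know the truth value of $\phi(\rv(d-a_0)+\rv(a_0-a_i),b_i)$ is constant along it, but you have no index at which it is known to be \emph{true} (the one instance you do know, $\phi(\rv(d-a_0),b_0)$, is at the index $0$ that became a parameter), so nothing propagates.

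The intended argument is much shorter and needs only the single index $i_0$: from $\val(d-a_{i_0})>\val(d-a'_0)$ the ultrametric gives $\val(a_{i_0}-a'_0)=\val(d-a'_0)$ and hence $\rv(d-a'_0)=\rv(a_{i_0}-a'_0)$ (this is the observation you already make, minus the constancy). Since $d\models\phi'(\rv(x-a'_0),b'_0)$, it follows that $\models\phi'\left(\rv(a_{i_0}-a'_0),b'_0\right)$, i.e.\ the \emph{parameter} $a_{i_0}$ satisfies the $j=0$ instance of the primed row. Now use indiscernibility of $(a'_j,b'_j)_j$ over $a_{i_0}$ (mutual indiscernibility) to conclude that $a_{i_0}\models\{\phi'(\rv(x-a'_j),b'_j)\}_{j\in\omega}$, contradicting $k$-inconsistency of that row; the other inequality is symmetric, and taking $i=j=0$ gives $\val(d-a_0)=\val(d-a'_0)=\gamma$. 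In other words, the witness of the contradiction should be $a_{i_0}$ itself, not $d$; once you transfer satisfaction to $a_{i_0}$, no appeal to Lemma \ref{lem: cutting a pseudo-convergent sequence}, to the fan/pseudo-convergence trichotomy, or to any Ramsey extraction is needed.
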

\begin{proof}
Assume that $\val\left(d-a_{i}\right)>\val\left(d-a_{0}'\right)$
for some $i$. Then $\rv\left(d-a_{0}'\right)=\rv\left(a_{i}-a_{0}'\right)$.
So $\models\phi'\left(\rv\left(a_{i}-a_{0}'\right),b_{0}'\right)$,
and by mutual indiscernibility $a_{i}\models\left\{ \phi'\left(\rv\left(x-a_{j}'\right),b_{j}'\right)\right\} _{j\in\omega}$
--- a contradiction. The other part is by symmetry.\end{proof}
\begin{claim}
\label{cla: gamma is below}$\gamma\leq\val\left(a_{0}-a_{0}'\right)$.\end{claim}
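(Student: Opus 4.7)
The plan is essentially immediate: this claim is a one-line consequence of the previous claim together with the ultrametric inequality on the valuation. The substantive work has already been done in establishing $\val(d - a_0) = \val(d - a_0') = \gamma$; the present claim simply records what this equality says about the distance between the two centers.

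Concretely, from the preceding claim I have $\val(d - a_0) = \val(d - a_0') = \gamma$. Writing
\[
a_0 - a_0' = (d - a_0') - (d - a_0)
\]
and applying the ultrametric inequality gives
\[
\val(a_0 - a_0') \;\geq\; \min\bigl\{\val(d - a_0),\, \val(d - a_0')\bigr\} \;=\; \gamma,
\]
as required. If it happens that $a_0 = a_0'$, we interpret $\val(0) = +\infty$ and the inequality holds trivially.

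I do not anticipate any obstacle here. The only point worth flagging is that the argument uses $d$ only as a witness to the finite consistency of both rows at level $0$, so the claim is really an assertion about \emph{any} common realizer: whenever a single element lies in both $\phi(\rv(x-a_0),b_0)$ and $\phi'(\rv(x-a_0'),b_0')$ the two centers must lie within a ball of radius $\gamma$ of each other. The real difficulty I expect to encounter next lies not in this claim but in exploiting it: one will need to combine $\gamma \leq \val(a_0-a_0')$ with Lemma~\ref{lem: pseudo-conv or fan} and Lemma~\ref{lem: cutting a pseudo-convergent sequence}, applied separately to $(a_i)$ and $(a_j')$, to force the two rows to collapse to a single $\RV$-row, contradicting the mutual $L_{\ac}$-indiscernibility.
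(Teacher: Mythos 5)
Your proof is correct and is essentially the paper's argument: both rest on the previous claim that $\val(d-a_0)=\val(d-a_0')=\gamma$ together with the ultrametric inequality applied to $a_0-a_0'=(d-a_0')-(d-a_0)$. The paper merely phrases it as a contradiction (if $\val(a_0-a_0')<\gamma$ then $\val(d-a_0)$ would equal $\val(a_0-a_0')<\gamma$), while you state the direct inequality; the content is identical.
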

\begin{proof}
As otherwise $\val\left(d-a_{0}\right)=\val\left(d-a_{0}'\right)=\gamma>\val\left(a_{0}-a_{0}'\right)$, hence $\val(a_0 - a'_0) = \val((d-a'_0) - (a_0 - a'_0))$ $=\val\left(d-a_{0}\right)$
--- a contradiction.
\end{proof}

\noindent
We now consider several cases separately.

\smallskip
\noindent
\textbf{Case A}: $\val\left(a_{i}-a_{j}'\right)$ is constant, equal
to some $\gamma'\in\Gamma$.

\smallskip
As in this case the two sequences are mutually indiscernible over
$\gamma'$, we may add it to the base.
%
Note that $\gamma\leq\gamma'$ by Claim \ref{cla: gamma is below}.
The following subcases cover all the possible situations, using mutual
indiscernibility of the sequences over $\gamma'$.

\smallskip \noindent
\textbf{Subcase 1}: $\gamma<\gamma'$.

\smallskip
Then $\rv\left(d-a_{i}\right)=\rv\left(d-a_{j}'\right)=\alpha$ for
all $i,j$, for some some $\alpha\in\RV$ with $\val_{\rv}\left(\alpha\right)=\gamma$.
Note furthermore that for any $\alpha^{*}\in\RV$ such that $\val_{\rv}\left(\alpha^{*}\right)<\gamma'$
we can find some $d^{*}\in K$ such that $\rv\left(d^{*}-a_{i}\right)=\rv\left(d^{*}-a_{i}'\right)=\alpha^{*}$.

But then consider the array $$\widetilde{\phi}\left(\widetilde{x},b_{i}\right)=\phi\left(\widetilde{x},b_{i}\right)\land\val_{\rv}\left(\widetilde{x}\right)<\gamma',$$
$$\widetilde{\phi}'\left(\widetilde{x},b_{i}'\right)=\phi'\left(\widetilde{x},b'_{i}\right)\land\val_{\rv}\left(\widetilde{x}\right)<\gamma',$$
where $\widetilde{x}$ and $b_{i},b_{i}'$ are ranging over the $\RV$
sort and $\widetilde{\phi},\widetilde{\phi}'$ are $\RV$-formulas
(we are abusing the notation by writing $\val_{\rv}\left(\widetilde{x}\right)<\gamma'$
as a shortcut for $\val_{\rv}(\tilde{x}) < \val_{\rv}\left(a_{\infty}-a_{\infty}'\right)$).
We have $\models\widetilde{\phi}\left(\alpha,b_{0}\right)\land\widetilde{\phi}'\left(\alpha,b_{0}'\right)$
and $\left\{ \widetilde{\phi}\left(\widetilde{x},b_{i}\right)\right\} _{i\in\mathbb{Z}},\left\{ \widetilde{\phi}'\left(\widetilde{x},b_{i}'\right)\right\} _{i\in\mathbb{Z}}$
are both inconsistent by the previous observation as the original
array was inconsistent. This gives us an $\inp$-pattern in the structure induced on the $\RV$ sort, and so implies that $\RV$
is not $\inp$-minimal.

\smallskip \noindent
\textbf{Subcase 2}: $\gamma=\gamma'$, $\val\left(a_{i}-a_{j}\right)>\gamma$
and $\val\left(a_{i}'-a_{j}'\right)>\gamma$ for all $i<j$.

\smallskip
It follows by Remark \ref{rem: basic rv} that there are $\alpha,\alpha'\in\RV$ with $\val_{\rv}\left(\alpha\right)=\val_{\rv}\left(\alpha'\right)=\gamma$
such that $\rv\left(d-a_{i}\right)=\alpha$ and $\rv\left(d-a_{i}'\right)=\alpha'$
for all $i$. Furthermore, $\rv\left(a_{i}-a_{j}'\right)=\alpha'-\alpha=:\beta$
for all $i,j$. It follows that our sequences are mutually indiscernible
over $\beta$ and we can add it to the base. 

We then consider a new array $$\widetilde{\phi}\left(\widetilde{x},b_{i}\right)=\phi\left(\widetilde{x},b_{i}\right)\land\val_{\rv}\left(\widetilde{x}\right)=\gamma,$$
$$\widetilde{\phi}'\left(\widetilde{x},b_{i}'\right)=\phi'\left(\widetilde{x}-\beta,b_{i}'\right)\land\val_{\rv}\left(\widetilde{x}\right)=\gamma.$$
It follows that $\alpha\models\widetilde{\phi}\left(\widetilde{x},b_{0}\right)\land\widetilde{\phi}'\left(\widetilde{x},b_{0}'\right)$, so to contradict $\inp$-minimality of $\RV$ it is enough to show that $\left\{ \widetilde{\phi}\left(\widetilde{x},b_{i}\right)\right\} _{i\in\mathbb{Z}},\left\{ \widetilde{\phi}'\left(\widetilde{x},b_{i}'\right)\right\} _{i\in\mathbb{Z}}$
are both inconsistent. Let $\alpha^{*}\in\RV$
with $\val_{\rv}\left(\alpha^{*}\right)=\gamma$ be arbitrary, and take $d^* \in K$ such that $\rv(d^* - a_0) = \alpha^*$. Using Remark \ref{rem: basic rv} again, we then have $\rv\left(d^{*}-a_{i}\right)=\alpha^{*}$ and $\rv\left(d^{*}-a_{i}'\right)=\alpha^{*}+\beta$ for all $i$. Hence any $\alpha^*$ realizing a row in the new array gives $d^*$ realizing a row in the original array.

\smallskip \noindent
\textbf{Subcase 3}: $\gamma=\gamma'$, $\val\left(a_{i}-a_{j}\right)>\gamma$
and $\val\left(a_{i}'-a_{j}'\right)=\gamma$ for all $i<j$.

\smallskip
In this case we still have some $\alpha\in\RV$ such that $\rv\left(d-a_{i}\right)=\alpha$
for all $i$. On the other hand, it follows that $\rv\left(d-a_{i}'\right)=\rv\left(d-a_{\infty}\right)+\rv\left(a_{\infty}-a_{i}'\right)$.

We then consider a new array given by $$\widetilde{\phi}\left(\widetilde{x},b_{i}\right)=\phi\left(\widetilde{x},b_{i}\right)\land\val_{\rv}\left(\widetilde{x}\right)=\gamma,$$
$$\widetilde{\phi}'\left(\widetilde{x},\widetilde{b}_{i}'\right)=\phi'\left(\widetilde{x}+\rv\left(a_{\infty}-a_{i}'\right),b_{i}'\right)\land\val_{\rv}\left(\widetilde{x}\right)=\gamma\land \WD \left( \tilde{x}, \rv\left(a_{\infty}-a_{i}'\right)\right),$$
so $\widetilde{b}_{i}'=\rv\left(a_{\infty}-a_{i}'\right)\widehat{\,}b_{i}'$.
Note that $\left(b_{i}\right)_{i\in\mathbb{Z}}$ and $\left(\widetilde{b}_{i}'\right)_{i\in\mathbb{Z}}$
are mutually indiscernible sequences in $\RV$. It follows that $\alpha\models\widetilde{\phi}\left(\widetilde{x},b_{0}\right)\land\widetilde{\phi}'\left(\widetilde{x},\widetilde{b}_{0}'\right)$, hence to contradict $\inp$-minimality of $\RV$ it is enough to show that both $\left\{ \widetilde{\phi}\left(\widetilde{x},b_{i}\right)\right\} _{i\in\mathbb{Z}},\left\{ \widetilde{\phi}'\left(\widetilde{x},\widetilde{b}_{i}'\right)\right\} _{i\in\mathbb{Z}}$
are inconsistent.
Let $\alpha^{*}\in\RV$ be arbitrary such that $\val_{\rv}\left(\alpha^{*}\right)=\gamma$ and $\WD(\alpha^*, \rv(a_\infty - a'_i))$ for all $i$.
Let $d^{*} \in K$ be such that $\rv(d^* - a_\infty) = \alpha^*$. Then $\rv\left(d^{*}-a_{i}\right)=\alpha^{*}$
and $\rv\left(d^{*}-a_{i}'\right)=\alpha^{*}+\rv\left(a_{\infty}-a_{i}'\right)$ for all $i$. This implies that for any $\alpha^*$ realizing a row in the new array, the corresponding $d^*$ realizes the same row in the original array.

\smallskip \noindent
\textbf{Subcase 4}: $\gamma=\gamma'$, $\val\left(a_{i}-a_{j}\right)=\val\left(a_{i}'-a_{j}'\right)=\gamma$
for all $i<j$.

\smallskip
Then $\rv\left(d-a_{i}\right)=\rv\left(d-a_{\infty}\right)+\rv\left(a_{\infty}-a_{i}\right)$
and $\rv\left(d-a_{i}'\right)=\rv\left(d-a_{\infty}\right)+\rv\left(a_{\infty}-a_{i}'\right)$
(as $\val\left(d-a_{i}'\right)=\val\left(d-a_{i}\right)=\val\left(d-a_{\infty}\right)=\val\left(a_{\infty}-a_{i}'\right)$, because the first three are equal to $\gamma$ and the last one to $\gamma'$).

We consider a new array given by
$$\widetilde{\phi}\left(\widetilde{x},\widetilde{b}_{i}\right)=\phi\left(\widetilde{x}+\rv\left(a_{\infty}-a_{i}\right),b_{i}\right)\land\val_{\rv}\left(\widetilde{x}\right)=\gamma\land \WD \left( \widetilde{x}, \rv\left(a_{\infty}-a_{i}\right)\right),$$
$$\widetilde{\phi}'\left(\widetilde{x},\widetilde{b}_{i}'\right)=\phi'\left(\widetilde{x}+\rv\left(a_{\infty}-a_{i}'\right),b_{i}'\right)\land\val_{\rv}\left(\widetilde{x}\right)=\gamma\land \WD \left(\widetilde{x}, \rv\left(a_{\infty}-a_{i}'\right)\right),$$
so $\widetilde{b}_{i}=\rv\left(a_{\infty}-a_{i}\right)\widehat{\,}b_{i}$
and $\widetilde{b}_{i}'=\rv\left(a_{\infty}-a_{i}'\right)\widehat{\,}b_{i}'$.
Note that $\left(\widetilde{b}_{i}\right)_{i\in\mathbb{Z}}$ and $\left(\widetilde{b}_{i}'\right)_{i\in\mathbb{Z}}$
are mutually indiscernible sequences in $\RV$. It follows that $\alpha\models\widetilde{\phi}\left(\widetilde{x},\widetilde{b}_{0}\right)\land\widetilde{\phi}'\left(\widetilde{x},\widetilde{b}_{0}'\right)$, so to contradict $\inp$-minimality of $\RV$ it is enough to show that both $\left\{ \widetilde{\phi}\left(\widetilde{x},\widetilde{b}_{i}\right)\right\} _{i\in\mathbb{Z}},\left\{ \widetilde{\phi}'\left(\widetilde{x},\widetilde{b}_{i}'\right)\right\} _{i\in\mathbb{Z}}$
are inconsistent. Let $\alpha^{*}\in\RV$ be arbitrary such that $\val_{\rv}\left(\alpha^{*}\right)=\gamma$.
Let $d^{*}$ be such that $\rv(d^* - a_\infty) = \alpha^*$. Then $\rv\left(d^{*}-a_{i}\right)=\alpha^{*}+\rv\left(a_{\infty}-a_{i}\right)$ and 
$\rv\left(d^{*}-a_{i}'\right)=\alpha^{*}+\rv\left(a_{\infty}-a_{i}'\right)$ for all $i$, assuming these sums are well-defined (see Remark \ref{rem: WD}). But this implies that for any $\alpha^*$ realizing a row in the new array (hence all the sums above corresponding to this row are well-defined by the choice of $\widetilde{\phi}, \widetilde{\phi}'$), the corresponding $d^*$ realizes the same row in the original array.

\smallskip \noindent
\textbf{Subcase 5}: $\gamma=\gamma'$, $\val\left(a_{i}-a_{i}\right)=\gamma$
and $\val\left(a_{i}'-a_{j}'\right)>\gamma$ for all $i<j$.

\smallskip
Follows from Subcase 3 by symmetry.

\medskip \noindent
\textbf{Case B}: Not Case A.
\begin{claim}
At least one of the sequences $\left(a_{i}\right)_{i\in\mathbb{Z}}$,
$\left(a_{i}'\right)_{i\in\mathbb{Z}}$ is not a fan.\end{claim}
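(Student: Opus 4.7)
My plan is to argue by contradiction: assume both $(a_{i})_{i \in \mathbb{Z}}$ and $(a_{i}')_{i \in \mathbb{Z}}$ are fans, and show that this forces $\val(a_i - a_j')$ to be constant in $(i,j)$, contradicting the standing hypothesis that we are in Case B. Write $\mu = \val(a_i - a_j)$ and $\mu' = \val(a_i' - a_j')$ for the (constant) fan values. By mutual $L_{\ac}$-indiscernibility, $\val(a_i - a_j')$ depends only on the relative order of $i$ and $j$, so takes at most three values $\nu_-, \nu_0, \nu_+$ corresponding to $i<j$, $i=j$, $i>j$; I want $\nu_- = \nu_0 = \nu_+$.

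Fix $j$. By mutual $L_{\ac}$-indiscernibility, the sequence $(a_{i})_{i\in \mathbb{Z}}$ is $L_{\ac}$-indiscernible over $a_{j}'$, so $(a_{i}-a_{j}')_{i\in\mathbb{Z}}$ is an $L_{\ac}$-indiscernible sequence of singletons in $\mathbb{K}$. The differences $\val\bigl((a_{k}-a_{j}') - (a_{i}-a_{j}')\bigr)=\val(a_{k}-a_{i})=\mu$ are constant for $i<k$ by the fan hypothesis on $(a_i)$, so Lemma \ref{lem: pseudo-conv or fan} places this sequence in the fan case. In particular the valuation sequence $\bigl(\val(a_{i}-a_{j}')\bigr)_{i\in\mathbb{Z}}$ is an $L_{\ac}$-indiscernible sequence in the linearly ordered group $\Gamma$, and therefore either constant or strictly monotonic. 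However, for fixed $j$, this sequence assumes at most the three values $\nu_-, \nu_0, \nu_+$ on the infinite index set $\mathbb{Z}$; strict monotonicity would require infinitely many distinct values, so the only option is constancy. Thus $\val(a_i - a_j')$ is independent of $i$ for each fixed $j$; by the symmetric argument using the fan hypothesis on $(a_i')$, it is also independent of $j$. Hence $\val(a_i - a_j') \equiv \gamma'$ for some single value in $\Gamma$, placing us in Case A --- a contradiction.

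I do not foresee a substantial obstacle; the crux is the linear-order/finite-value observation in $\Gamma$, combined with the standard fact that mutual $L_{\ac}$-indiscernibility of the two sequences yields $L_{\ac}$-indiscernibility of each over individual elements of the other. The fan hypothesis enters only to trigger Lemma \ref{lem: pseudo-conv or fan} in the fan case; once the valuation sequence is seen to be indiscernible in $\Gamma$, the three-value bound from mutual indiscernibility closes the argument.
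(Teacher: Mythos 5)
There is a genuine gap at the very first step, and the rest of the argument leans on it. You assert that ``by mutual $L_{\ac}$-indiscernibility, $\val(a_i-a_j')$ depends only on the relative order of $i$ and $j$, so takes at most three values.'' Mutual indiscernibility only guarantees that the \emph{types} $\tp(a_i,a'_j)$ coincide (in fact for all pairs $(i,j)$, since the two index sets carry no meaningful relative order with respect to each other); it does not identify the actual elements $\val(a_i-a'_j)\in\Gamma$, which need not lie in the definable closure of any fixed base. The paper itself supplies a counterexample to your inference: in Subcase 1 of Case B, $(a_i)$ is pseudo-convergent and every $a'_j$ is a pseudo-limit of it, the two sequences are mutually indiscernible, and yet $\val(a_i-a'_j)$ is strictly increasing in $i$, taking infinitely many values. (Already for a single indiscernible pseudo-convergent sequence, all pairs have the same type while $\val(a_i-a_j)$ varies.) Since your exclusion of strict monotonicity of $\bigl(\val(a_i-a'_j)\bigr)_{i}$ rests entirely on this finite-value bound --- and you explicitly state that the fan hypothesis enters only to invoke Lemma \ref{lem: pseudo-conv or fan}, an invocation which by itself adds nothing, as it merely restates that $\val(a_k-a_i)=\mu$ --- the proof does not close as written.

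The fix is to let the fan hypothesis, not indiscernibility, kill monotonicity: if $\val(a_{i_1}-a'_j)<\val(a_{i_2}-a'_j)<\val(a_{i_3}-a'_j)$ for $i_1<i_2<i_3$, then the ultrametric inequality gives $\val(a_{i_1}-a_{i_2})=\val(a_{i_1}-a'_j)\neq\val(a_{i_2}-a'_j)=\val(a_{i_2}-a_{i_3})$, contradicting $\val(a_i-a_k)\equiv\mu$. Combined with your (correct) observation that $\bigl(\val(a_i-a'_j)\bigr)_i$ is an indiscernible sequence in the linear order $\Gamma$, hence constant or strictly monotone, this yields constancy in $i$ for fixed $j$, and by the symmetric argument constancy in $j$, putting you in Case A. This repaired argument is essentially the paper's: there one uses mutual indiscernibility to rule out the ``one exceptional index'' configuration and obtain $\val(a_i-a'_j)\leq\min\{\alpha,\alpha'\}$, after which the ultrametric inequality forces $\val(a_i-a'_j)$ to be constant.
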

\begin{proof}
Assume that both are, say $\val\left(a_{i}-a_{j}\right)=\alpha$ and
$\val\left(a_{i}'-a_{j}'\right)=\alpha'$ for all $i<j$. It follows
by mutual indiscernibility that $\val\left(a_{i}-a_{j}'\right)\leq\min\left\{ \alpha,\alpha'\right\} $
for all $i,j$. But then $\val\left(a_{i}-a_{j}'\right)=\val\left(a_{0}-a_{0}'\right)$
for all $i,j$, thus putting us in Case A.
\end{proof}
So we may assume that $\left(a_{i}\right)_{i\in\mathbb{Z}}$ is a
pseudo-convergent sequence (by Lemma \ref{lem: pseudo-conv or fan},
possibly exchanging $\left(a_{i}\right)$ with $\left(a_{i}'\right)$
and reverting the ordering of the sequence).

\smallskip \noindent
\textbf{Subcase 1}: Some (equivalently, every) $a_{i}'$ is a pseudo-limit
of $\left(a_{i}\right)_{i\in\mathbb{Z}}$.

\smallskip
Then $\rv\left(d-a_{i}'\right)=\rv\left(d-a_{\infty}\right)$ for
all $i$ (by Claim \ref{cla: gamma is below}).

We define $\widetilde{\phi}'\left(\widetilde{x},b_{i}'\right)=\phi'\left(\widetilde{x},b_{i}'\right)\land\val_{\rv}\left(\widetilde{x}\right)<\val\left(a_{\infty}-a_{\infty}'\right)$.

By Lemma \ref{lem: cutting a pseudo-convergent sequence} it follows
that there is some $i^{*}\in\left\{ -\infty\right\} \cup\mathbb{Z}\cup\left\{ \infty\right\} $
such that $\rv\left(d-a_{i}\right)=\rv\left(d-a_{\infty}\right)$
for $i>i^{*}$ and $\rv\left(d-a_{i}\right)=\rv\left(a_{\infty}-a_{i}\right)$
for $i<i^{*}$. Again by Claim \ref{cla: gamma is below}, $i^{*}\leq0$.
Let's restrict $\left(a_{i}\right)_{i\in\mathbb{Z}}$ to $\left(a_{i}\right)_{i\in\omega}$.

If $\val\left(d-a_{\infty}\right)<\val\left(a_{\infty}-a_{0}\right)$
then $\rv\left(d-a_{i}\right)=\rv\left(d-a_{\infty}\right)$ for all
$i$. If $\val\left(d-a_{\infty}\right)=\val\left(a_{\infty}-a_{0}\right)$
then $\rv\left(d-a_{i}\right)=\rv\left(d-a_{\infty}\right)$ for all
$i>0$ and $\rv\left(d-a_{0}\right)=\rv\left(d-a_{\infty}\right)+\rv\left(a_{\infty}-a_{0}\right)$.
We thus define 
\begin{eqnarray*}
\widetilde{\phi}\left(\widetilde{x},\widetilde{b}_{i}\right) & = & \left(\val\left(a_{i}-a_{\infty}\right)>\val_{\rv}\left(\widetilde{x}\right)\land\phi\left(\widetilde{x},b_{i}\right)\right)\lor\\
 &  & \lor\left(\val\left(a_{i}-a_{\infty}\right)=\val_{\rv}\left(\widetilde{x}\right)\land\WD(\widetilde{x},\rv\left(a_{\infty}-a_{i}\right))\land\phi\left(\widetilde{x}+\rv\left(a_{\infty}-a_{i}\right),b_{i}\right)\right)
\end{eqnarray*}

with $\widetilde{b}_{i}=b_{i}\hat{\,}\rv\left(a_{i}-a_{\infty}\right)$.
Then $\left(\widetilde{b}_{i}\right),\left(b_{i}'\right)$ are mutually
indiscernible sequences in $\RV$ and $\rv\left(d-a_{\infty}\right)\models\widetilde{\phi}\left(\widetilde{x},\widetilde{b}_{0}\right)\land\widetilde{\phi}'\left(\widetilde{x},b_{0}'\right)$.
By $\inp$-minimality of $\RV$ we have that
either there is some $\alpha^{*}\models\left\{ \widetilde{\phi}'\left(\widetilde{x},b_{i}'\right)\right\} _{i\in\omega}$,
in which case we can find $d^{*}$ with $\rv\left(d^{*}-a_{\infty}\right)=\alpha^{*}$
and thus $d^{*}\models\left\{ \phi'\left(\rv\left(x-a_{i}'\right),b_{i}'\right)\right\} _{i\in\omega}$,
or that $\alpha^{*}\models\left\{ \widetilde{\phi}\left(\widetilde{x},\widetilde{b}_{i}\right)\right\} _{i\in\omega}$.
Then it follows from the definition of $\widetilde{\phi}$ that there
is $d^{*}$ satisfying $\rv\left(d^{*}-a_{\infty}\right)=\alpha^{*}$
and such that that $d^{*}\models\left\{ \phi\left(\rv\left(x-a_{i}\right),b_{i}\right)\right\} _{i\in\omega}$
--- a contradiction.

\smallskip \noindent
\textbf{Subcase 2}: Not Subcase 1.

\smallskip \noindent
Then we have the following observations.
\begin{claim}\label{cla: below}
For any $i,j \in \mathbb{Z}$ we have $\val(a_\infty - a_i) > \val(a'_j - a_i)$.
\end{claim}
\begin{proof}
Since $a'_j$ is not a pseudo-limit of the sequence $(a_i)$ (as we are not in Subcase 1), we must have $\val(a'_j - a_{i_1}) < \val(a_{i_2} - a_{i_1})$ for some $i_2 > i_1 \in \mathbb{Z}$. Then the claim follows by mutual indiscernibility.
\end{proof}

\begin{claim}
The sequence $(a'_i)$ must be pseudo-convergent.
\end{claim}
\begin{proof}
If $(a'_i)$ was a fan, in view of Claim \ref{cla: below} we would have $\val(a_i-a'_j)$ constant --- a contradiction since we are not in Case A. Hence it is pseudo-convergent, after possibly reversing the order, by Lemma \ref{lem: pseudo-conv or fan}.
\end{proof}

These two claims imply that the only
possibility is that $\left(a_{i}'\right)$ is pseudo-convergent and
that any $a_{i}$ is a pseudo-limit of it. But then reversing the
roles of the two sequences we are back to Subcase 1, concluding the analysis of the special case.

\medskip

Now we reduce the case of a general $\inp$-pattern to the special
case treated above. Assume that there is an $\inp$-pattern of depth
$2$. By Ramsey and compactness we may assume that the rows are mutually
indiscernible in the $L_{\ac}$-language. Though in Fact \ref{fac: Flenner's cell decomposition}
the formula defining $D$ may depend on the formula defining $S$,
by indiscernibility, Ramsey and compactness we may assume that the formulas in our $\inp$-pattern
 are in fact of the form $\phi\left(\rv\left(x-y_{1}\right),\ldots,\rv\left(x-y_{n}\right),z\right)$
and $\phi'\left(\rv\left(x-y_{1}\right),\ldots,\rv\left(x-y_{n}\right),z'\right)$,
for some $n\in\omega$, where $\phi$ and $\phi'$ are $\RV$-formulas. Let $d$ realize the first column of the $\inp$-pattern.

\textbf{Case 1}: $\val\left(d-a_{0,0}\right)<\val\left(a_{0,n}-a_{0,0}\right)$.
Then $\rv\left(d-a_{0,0}\right)=\rv\left(d-a_{0,n}\right)$ and we
define $\widetilde{\phi}\left(x,a_{i}\widetilde{b}_{i}\right)=\phi\left(\rv\left(x-a_{i,0}\right),\ldots,\rv\left(x-a_{i,n-1}\right),\rv\left(x-a_{i,0}\right), b_i\right)\land\val\left(x-a_{i,0}\right)<\val\left(a_{i,n}-a_{i,0}\right)$
with $\widetilde{b}_{i}=b_{i}\hat{\,}\rv\left(a_{i,n}-a_{i,0}\right)$.

\textbf{Case 2}: $\val\left(d-a_{0,0}\right)>\val\left(a_{0,n}-a_{0,0}\right)$.
Then $\rv\left(d-a_{0,n}\right)=\rv\left(a_{0,n}-a_{0,0}\right)$
and we define $\widetilde{\phi}\left(x,a_{i}\widetilde{b}_{i}\right)=\phi\left(\rv\left(x-a_{i,0}\right),\ldots,\rv\left(x-a_{i,n-1}\right),\rv\left(a_{i,n}-a_{i,0}\right), b_i \right)\land\val\left(x-a_{i,0}\right)>\val\left(a_{i,n}-a_{i,0}\right)$
with $\widetilde{b}_{i}=b_{i}\hat{\,}\rv\left(a_{i,n}-a_{i,0}\right)$.

\textbf{Case 3}: $v\left(d-a_{0,n}\right)<v\left(a_{0,n}-a_{0,0}\right)$
and \textbf{Case 4}: $v\left(d-a_{0,n}\right)>v\left(a_{0,n}-a_{0,0}\right)$
are symmetric to \textbf{Case 1} and \textbf{Case 2} respectively.

\textbf{Case 5}: $v\left(d-a_{0,0}\right)=v\left(d-a_{0,n}\right)=v\left(a_{0,n}-a_{0,0}\right)$.
Then $\rv\left(d-a_{0,0}\right)=\rv\left(d-a_{0,n}\right)+\rv\left(a_{0,n}-a_{0,0}\right)$.
We define 
\begin{eqnarray*}
\widetilde{\phi}\left(x,a_{i}\widetilde{b}_{i}\right) & = & \phi\left(\rv\left(x-a_{i,n}\right)+\rv\left(a_{i,n}-a_{i,0}\right),\ldots,\rv\left(x-a_{i,n-1}\right),\rv\left(x-a_{i,n}\right), b_i \right)\\
 &  & \land v\left(x-a_{i,n}\right)=v\left(a_{i,n}-a_{i,0}\right)\land \WD \left(\rv\left(x-a_{i,n}\right),\rv\left(a_{i,n}-a_{i,0}\right) \right)
\end{eqnarray*}
 with $\widetilde{b}_{i}=b_{i}\hat{\,}\rv\left(a_{i,n}-a_{i,0}\right)$.

In any of the cases, we still have that $\left(\widetilde{b}_{i}\right)_{i\in\mathbb{Z}},\left(b_{i}'\right)_{i\in\mathbb{Z}}$
are mutually indiscernible, that $d\models\widetilde{\phi}\left(x,a_{0}\widetilde{b}_{0}\right)\land\phi'\left(x,a_{0}'b_{0}'\right)$
and that $\left\{ \widetilde{\phi}\left(x,a_{i}\widetilde{b}_{i}\right)\right\} _{i\in\mathbb{Z}}$
is inconsistent. Thus we get a new $\inp$-pattern replacing $\left\{ \phi\left(x,a_{i}b_{i}\right)\right\} $
by $\left\{ \widetilde{\phi}\left(x,a_{i}\widetilde{b}_{i}\right)\right\} $,
with $\widetilde{\phi}$ involving one less term of the form $\rv\left(x-y_{i}\right)$.
Repeating the same operation $n$ times for $\phi$, and then for
$\phi'$, we reduce the situation to the special case of formulas
considered before.

\section{Relative quantifier elimination for $\RV$}

Now it will be more convenient to consider a valued field $K$ in
a slightly weaker language $L_{\RV}$. Namely, we associate with it
a three-sorted structure $\bar{K}=\left(K,\RV,\Gamma,\val_{\rv}\right)$
such that on $\RV$ we have the multiplicative group structure $\cdot,1$,
a constant $0$, a predicate for the residue field $k\subseteq\RV$ along with addition $\tilde{+}$ on $k$,
and a map $\val_{\rv}:\RV\to\Gamma$.

The partial addition relation $\oplus$ on $\RV$ is definable in $L_{\RV}$ (using \cite[Proposition 2.7]{flenner2011relative}):
$$\oplus(x,y,z) \iff \left( \val_{\rv}(x) < \val_{\rv}(y) \land z = x \right) \lor \left( \val_{\rv}(y) < \val_{\rv}(x) \land z = y \right) \lor$$
$$ \lor \left( \val_{\rv}(x) = \val_{\rv}(y) \land \left( \left(\frac{x}{y}\tilde{+}1 = 0 \land \val_{\rv}(z) > \val_{\rv}(x) \right) \lor \left( (\frac{x}{y} \tilde{+} 1) y = z  \land z \neq 0 \right) \right) \right).$$
The conclusion is that in particular if $\left(\RV,\Gamma,\val_{\rv}\right)$
is $\inp$-minimal as an $L_{\RV}$-structure, then $\left(\RV,\cdot,\oplus \right)$
is $\inp$-minimal as an $L_{\RV^{+}}$-structure. In the next section
we are going to demonstrate the former under the assumptions of the
main theorem, but in order to do that we prove a relative quantifier
elimination result for (a certain expansion of) the $L_{\RV}$ language.

\medskip\noindent
\textbf{Assumptions}
\begin{itemize}
\item $G$ is an abelian group such that $G/nG$ is finite for all $n<\omega$.
\item $K\subseteq G$ is a subgroup, with quotient $H=G/K$. Let $\pi:\, G\to H$
denote the projection map.
\item $M$ is the two-sorted structure with sorts $G$ and $H$, and the
following language.

\begin{itemize}
\item On $G$: we have the group structure $+,-,0$, a predicate $K\left(x\right)$
for the subgroup $K$, predicates $\left(P_{n}\left(x\right):n<\omega\right)$
interpreted as $P_{n}\left(x\right)\leftrightarrow\exists y\, ny=x$,
and constants naming a countable subgroup $G_{0}$ containing representatives
of each class of $G/nG$, for each $n<\omega$ (such that moreover
all classes of elements from $K$ are represented by elements from
$G_{0}\cap K$).
\item On $H$: we have some language $L_{H}$ (containing the induced group
structure) and we assume that the structure $\left(H,L_{H}\right)$
eliminates quantifiers.
\item On $K$: we have some language $L_{K}$ such that $\left(K,L_{K}\right)$
eliminates quantifiers and contains the language induced from $G$
(via the group structure and predicates $P_{n}$).
\item We have the projection group homomorphism $\pi:G\to H$.
\end{itemize}
\item Moreover, we assume that the language contains no other function symbols
apart from $\pi$ and the group structures on $G$ and $H$.
\item Finally, $H$ is torsion-free.\end{itemize}
\begin{prop}
\label{prop: Elimination of quantifiers in RV}$M$ has quantifier
elimination.\end{prop}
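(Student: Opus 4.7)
My plan is to verify Robinson's quantifier elimination test: for any two models $M_1, M_2 \models \operatorname{Th}(M)$ sharing a common substructure $A$ and any element $b \in M_1$, the quantifier-free type of $b$ over $A$ is realized in some elementary extension $M_2^* \succeq M_2$; by a standard induction this is equivalent to full quantifier elimination of $\operatorname{Th}(M)$. I would then split on the sort of $b$.

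If $b$ is of sort $H$, every atomic formula involving $b$ reduces to an $L_H$-atomic formula with parameters in $(A \cap H) \cup \pi(A \cap G)$, since $\pi$ is the only function symbol with output of sort $H$. QE of $L_H$ on $(H, L_H)$ then immediately produces the desired realization in an elementary extension.

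If $b$ is of sort $G$, I would first apply the previous case to produce $h \in M_2^*$ realizing the QF $L_H$-type of $\pi(b)$ over $(A \cap H) \cup \pi(A \cap G)$, and then lift $h$ to an element $b^* \in \pi^{-1}(h)$ realizing the remaining conditions. The outstanding atomic facts about $b$ over $A$ come in four flavors: linear identities $m b = g$ with $m \in \mathbb{Z}, g \in A$; divisibility predicates $P_n(mb + g)$; $K$-membership predicates $K(mb + g)$; and $L_K$-atomic formulas evaluated on translates $mb + g$ that lie in $K$. The $K$-membership facts unfold to $m \pi(b) + \pi(g) = 0_H$, already decided by the choice of $h$ (torsion-freeness of $H$ is needed so that the correspondence $K(mb+g) \leftrightarrow m \pi(b) = -\pi(g)$ is unambiguous). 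For divisibility, each $g \in A$ admits a representative $g_0 \in G_0$ with $g - g_0 \in nG$, whence $P_n(mb + g) \Leftrightarrow P_n(mb + g_0)$, reducing all such conditions to ones involving $b$ and named constants only; an analogous reduction using $G_0 \cap K$ applies to the $L_K$-conditions, and QE of $L_K$ then realizes the latter in an elementary extension of the $K$-sort.

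The main obstacle will be the compatibility step: the three layers --- the pure group-theoretic constraints on $b^*$ in $G$, the $L_K$-type of its $K$-shadow, and the $L_H$-type of $\pi(b^*)$ --- must be realized simultaneously. The hypotheses are tailored exactly for this: finiteness of $G/nG$ makes all coset data first-order, the constants for $G_0$ turn translation-invariance of divisibility and $K$-membership of translates into quantifier-free statements, and torsion-freeness of $H$ makes $K$-membership of translates of $b^*$ readable off $\pi(b^*)$ alone. I expect the simultaneous coherence of divisibility conditions across different moduli $n$ to be the most delicate piece of bookkeeping.
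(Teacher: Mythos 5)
Your framework---a one-point Robinson-style QE test, a split on the sort of the new element, QE of $L_H$ and $L_K$, the named representatives in $G_0$, torsion-freeness of $H$---is the same skeleton as the paper's back-and-forth, but the proposal has a genuine gap exactly at the point you yourself flag as ``the main obstacle'': the simultaneous realization is never constructed, and constructing it is the entire content of the proposition. Concretely, two arguments are missing. First, when the new element $b$ satisfies a linear dependence over $A$, say $mb=g\in A$ with $m>1$ minimal, you must exhibit in the other model an element $\beta$ with $m\beta=g$ \emph{and} with $\beta$ lying in the same coset of $kG$ as $b$ for every $k$ (and with the correct projection to $H$). Finiteness of $G/nG$ and the presence of $G_0$ do not give this by themselves; one needs the specific chain $P_k(b-g_0)\Rightarrow P_{mk}(mb-mg_0)$ for a representative $g_0\in G_0$, transfer of this (quantifier-free, parameters in $A\cup G_0$) divisibility to the target, division back to get $\beta=k\gamma+g_0$ with $m\beta=g$, torsion-freeness of $H$ to pin down $\pi(\beta)$, and a saturation/product-of-moduli step to satisfy all congruences at once. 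This is the paper's Step 3 and its Claim, and nothing in your sketch substitutes for it.

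Second, your plan realizes the $L_K$-conditions ``in an elementary extension of the $K$-sort'' and separately lifts an $H$-realization $h$, but the $L_K$-conditions concern the translates $mb+g$ of the very element $b^*$ you must produce: you need a single element whose projection is $h$, whose cosets mod $kG$ are all prescribed, whose translates landing in $K$ realize the given $L_K$-type (over $A\cap K$ and $G_0\cap K$), and which also satisfies the negated equations and negated $P_n$'s. Nothing in the proposal shows these demands are jointly satisfiable, and this joint satisfiability is not a routine bookkeeping matter. The paper sidesteps the clash by a stepwise reduction: first extend within $H$, then extend within $K$ (so that afterwards no new translate of the new element lies in $K$, making all $L_K$-constraints about elements already in $A$), then the case $m\alpha\in A$, then the ``free'' case; each step comes with explicit well-definedness and preservation checks (e.g.\ preservation of $P_k$ in the $K$-step uses that coset representatives of elements of $K$ are taken in $G_0\cap K$). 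Without either that decomposition or a direct proof of the compatibility you defer, the proposal is a plan rather than a proof.
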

\begin{proof}
We prove it by back-and-forth. So assume that $M$ is $\aleph_1$-saturated
and we have two substructures $A$ and $B$ from $M$ and a partial
isomorphism $f:A\to B$. So $A,B\supseteq G_{0}$ contain elements
from both $G$ and $H$, both are closed under the group operations,
inverse and $\pi$.

Let $\alpha\in M$ be arbitrary, and we want to extend $f$ to be
defined on $A_{1}=A\left(\alpha\right)$, the substructure generated
by $\alpha A$. We assume that $\alpha\notin A$.

\medskip \noindent
\textbf{Step }1: If $\alpha\in H$, then we can extend $f$.

\smallskip
As $f|_{A\cap H}$ is $L_{H}$-elementary by quantifier elimination
in $\left(H,L_{H}\right)$, there is $\beta\in H$ and a partial $L_{H}$-automorphism
$g$ extending $f|_{A\cap H}$ and sending $ $$A\left(\alpha\right)\cap H$
to $B\left(\beta\right)\cap H$. Then we extend $f$ to $F$ defined
on $A\left(\alpha\right)$ by taking $F=f\cup g$ (note that, as there
are no functions from $H$ to $G$ in the language, $A\left(\alpha\right)\cap G=A\cap G$).

\smallskip
So by iterating Step 1 we may assume that $\alpha\in G$ and that
$\pi\left(a+n\alpha\right)\in A$ for all $a\in A$ and $n\in\mathbb{Z}$.

\smallskip \noindent
\textbf{Step 2}: Assume that $\alpha\in K$. Then we can extend $f$.

\smallskip
As $f|_{A\cap K}$ is $L_{K}$-elementary by quantifier elimination,
we can find $\beta\in K$ and a partial $L_{K}$-automorphism $g$
extending it and sending $A\left(\alpha\right)\cap K$ to $B\left(\beta\right)\cap K$.
Then we define $F$ on $A\left(\alpha\right)$ by setting $F\left(a+n\alpha\right)=f\left(a\right)+g\left(n\alpha\right)=f\left(a\right)+ng\left(\alpha\right)$
for all $a\in A$, $n\in\mathbb Z$ (note that $n\alpha\in A\left(\alpha\right)\cap K$
for all $n\in\mathbb Z$ by the assumption) and $F$ acts like $f$ on $A(\alpha)\cap H = A\cap H$.
\begin{itemize}
\item $F$ is well-defined: Assume that $a+n\alpha=a'+n'\alpha$, so $A\ni a-a'=\left(n'-n\right)\alpha$,
and thus $f\left(a\right)-f\left(a'\right)=f\left(a-a'\right)=f\left(\left(n'-n\right)\alpha\right)=\ldots$
as $\left(n'-n\right)\alpha\in K\cap A$ and $g|_{A\cap K}=f|_{A\cap K}$
$\ldots=g\left(\left(n'-n\right)\alpha\right)=ng\left(\alpha\right)-n'g\left(\alpha\right)$.
Then we have $F\left(a+n\alpha\right)-F\left(a'+n'\alpha\right)=f\left(a\right)+g\left(n\alpha\right)-f\left(a'\right)-g\left(n'\alpha\right)=0$.
\item $F$ extends $f$: immediate from the definition.
\item Note that $F|_{A\left(\alpha\right)\cap K}=g$, as given $a+n\alpha\in A\left(\alpha\right)\cap K$
it follows that $a\in A\cap K$, and as $f|_{A\cap K}=g|_{A\cap K}$
we have $F\left(a+n\alpha\right)=f\left(a\right)+g\left(n\alpha\right)=g\left(a\right)+g\left(n\alpha\right)=g\left(a+n\alpha\right)$.
\item $F|_G$ is a group homomorphism: $F\left(a+n\alpha+a'+n'\alpha\right)=F\left(\left(a+a'\right)+\left(n+n'\right)\alpha\right)=f\left(a+a'\right)+g\left(\left(n+n'\right)\alpha\right)=f\left(a\right)+f\left(a'\right)+g\left(n\alpha\right)+g\left(n'\alpha\right)=F\left(a+n\alpha\right)+F\left(a'+n'\alpha\right)$.
\item $F$ is onto $B(\beta)$: every element of $B\left(\beta\right)$ is of the form
$b+n\beta$, so $F\left(f^{-1}\left(b\right)+n\alpha\right)=b+n\beta$.
\item $F$ preserves $\pi$: On one hand $\pi\left(F\left(a+n\alpha\right)\right)=\pi\left(f\left(a\right)+ng\left(\alpha\right)\right)=\pi\left(f\left(a\right)\right)+n\pi\left(g\left(\alpha\right)\right)=\ldots$
as $g\left(\alpha\right)\in K$ $\ldots=\pi\left(f\left(a\right)\right)+0=f\left(\pi\left(a\right)\right)=F\left(\pi\left(a\right)\right)$
(recall that $\pi\left(a\right)\in A$). On the other hand $ $ we
have $F\left(\pi\left(a+n\alpha\right)\right)=F\left(\pi\left(a\right)+n\pi\left(\alpha\right)\right)=F\left(\pi\left(a\right)+0\right)=F\left(\pi\left(a\right)\right)$.
\item In particular, $F$ preserves $K\left(x\right)=\left\{ x\in G:\,\pi\left(x\right)=0\right\} $.
\item $F$ preserves $P_{k}$: $P_{k}\left(F\left(a+n\alpha\right)\right)\Leftrightarrow P_{k}\left(f\left(a\right)+ng\left(\alpha\right)\right)\Leftrightarrow P_{k}\left(a+ng\left(\alpha\right)\right)$
(as $f\left(a\right)=a\mod kG$) $\Leftrightarrow$ $P_{k}\left(a+n\alpha\right)$
(as $g\left(\alpha\right)=\alpha\mod kG$ because all representatives
of classes of $\alpha\in K$ are in $G_{0}\cap K\subseteq A\cap K$,
$P_{k}\cap K$ is $L_{K}$-definable and $g|_{A\left(\alpha\right)\cap K}$
is $L_{K}$-elementary).
\item $F$ preserves every $\phi(x_{1},\ldots,x_{k})\in L_{K}$: As $F|_{A\left(\alpha\right)\cap K}=g$
and $g$ is an $L_{K}$-elementary map.
\item $F$ preserves every $\psi\in L_{H}$: As $\pi\left(a+n\alpha\right)=\pi\left(a\right)+n\pi\left(\alpha\right)\in A\cap H$
(as $\pi\left(\alpha\right)\in A$ by the assumption), and $F|_{A\cap H}=f|_{A\cap H}$
is $L_{H}$-elementary.
\end{itemize}
So $F$ is a partial isomorphism as wanted.

\smallskip
By iterating Step 2 we may assume that $a+n\alpha\in K\Rightarrow a+n\alpha\in A$
for all $a\in A$ and $n\in\omega$.

\smallskip \noindent
\textbf{Step} \textbf{3}: Assume that $m\alpha\in A$ for some $m\geq1$.
Then we can extend $f$.

\smallskip
Let $m$ be minimal with this property.

\begin{claim} There is $\beta\in G$ satisfying $m\beta=f\left(m\alpha\right)$
and $\beta=\alpha\mod kG$ for all $k\in\omega$. \end{claim}
\begin{proof} By $\omega$-saturation it suffices to shows this one $k$ at a time.
By assumption there is some $g\in G_{0}$ such that $P_{k}\left(\alpha-g\right)$, then $P_k\left(\alpha-g\right)\Rightarrow P_{mk}\left(m\alpha-mg\right)\Rightarrow P_{mk}\left(f\left(m\alpha\right)-mg\right)$
(as $m\alpha,mg\in A$, $f\left(mg\right)=mf\left(g\right)=mg$ and
$f$ preserves $P_{l}$ for all $l<\omega$) $\Rightarrow$ $\exists\gamma\in G$
such that $mk\gamma=f\left(m\alpha\right)-mg$. Let $\beta=k\gamma+g$.
Then $m\beta=f\left(m\alpha\right)$ and $\beta=g=\alpha\mod kG$,
and the claim is proved. \end{proof}

We define $F$ on $A\left(\alpha\right)\cap G$ by setting $F\left(a+n\alpha\right)=f\left(a\right)+n\beta$
and $F|_{A\left(\alpha\right)\cap H}=f|_{A\left(\alpha\right)\cap H}$
as $A\left(\alpha\right)\cap H=A\cap H$.
\begin{itemize}
\item $F$ is well-defined: If $a+n\alpha=a'+n'\alpha$ with $a,a'\in A$,
then $\left(n-n'\right)\alpha=a'-a\in A$. It follows that $m$ divides
$\left(n-n'\right)$ by minimality (assume that $n-n'=km+m_{1}$, $|m_1|<m$, then $m_{1}\alpha=a'-a-km\alpha\in A$, contradiction), say $\left(n-n'\right)=km$. Thus $f\left(a'\right)-f\left(a\right)=f\left(a'-a\right)=f\left(\left(n-n'\right)\alpha\right)=f\left(km\alpha\right)=kf\left(m\alpha\right)=km\beta=\left(n-n'\right)\beta$.
But then $F\left(a+n\alpha\right)-F\left(a'+n'\alpha\right)=f\left(a\right)+n\beta-f\left(a'\right)-n'\beta=0$.
\item $F$ extends $f$ is obvious from the definition.
\item $F$ is a group homomorphism from $A\left(\alpha\right)$ to $B\left(\beta\right)$:

$F\left(\left(a+n\alpha\right)+\left(a'+n'\alpha\right)\right)=F\left(\left(a+a'\right)+\left(n+n'\right)\alpha\right)=f\left(a+a'\right)+\left(n+n'\right)\beta=\left(f\left(a\right)+n\beta\right)+\left(f\left(a'\right)+n'\beta\right)=F\left(a+n\alpha\right)+F\left(a'+n'\alpha\right)$.
\item $F$ preserves $\pi$: First observe that $\pi\left(m\beta\right)=\pi\left(f\left(m\alpha\right)\right)$,
so $m\pi\left(\beta\right)=\pi\left(f\left(m\alpha\right)\right)\overset{\mbox{as }m\alpha\in A}{=}f\left(\pi\left(m\alpha\right)\right)=f\left(m\pi\left(\alpha\right)\right)=mf\left(\pi\left(\alpha\right)\right)$,
and as $H$ is torsion free this implies that $\pi\left(\beta\right)=f\left(\pi\left(\alpha\right)\right)$.
But then $F\left(\pi\left(a+n\alpha\right)\right)=f\left(\pi\left(a+n\alpha\right)\right)=f\left(\pi\left(a\right)+n\pi\left(\alpha\right)\right)=f\left(\pi\left(a\right)\right)+nf\left(\pi\left(\alpha\right)\right)=\pi\left(f\left(a\right)\right)+n\pi\left(\beta\right)=\pi\left(f\left(a\right)+n\beta\right)=\pi\left(F\left(a+n\alpha\right)\right)$.
\item In particular, $F$ preserves $K\left(x\right)=\left\{ x\in G:\,\pi\left(x\right)=0\right\} $.
\item $F$ preserves $P_{k}$$\left(x\right)$: By the choice of $\beta$
we have $\alpha=\beta\mod kG$ for all $k$, and for any $a\in A$
we have $f\left(a\right)=a\mod kG$ for all $k$ (as $G_{0}\subseteq A$
and $f$ preserves $P_{k}$), hence $P_{k}\left(F\left(a+n\alpha\right)\right)\Leftrightarrow P_{k}\left(f\left(a\right)+n\beta\right)\Leftrightarrow P_{k}\left(a+n\alpha\right)$.
\item $F$ preserves $L_{K}$-formulas: As $a+n\alpha\in K\Rightarrow a+n\alpha\in A$
by the assumption and $F|_{A\cap K}=f|_{A\cap K}$ is $L_{K}$-elementary
by elimination of quantifiers in $\left(K,L_{K}\right)$.
\item $F$ preserves $L_{H}$-formulas: As $F|_{A\left(\alpha\right)\cap H}=f|_{A\left(\alpha\right)\cap H=A\cap H}$
by definition, and $f$ is $L_{H}$-elementary.
\end{itemize}

\noindent
So we may assume that:
\begin{enumerate}
\item $A\cap H$ is a relatively divisible subgroup of $H$ (iterating Step 1);
\item $A\cap G$ is a relatively divisible subgroup of $A(\alpha)\cap G$ (iterating Step 3);
\item $\pi\left(a+n\alpha\right)\in A$ for all $a\in A,n\in\mathbb Z$ (iterating
Step 1);
\item $a+n\alpha\notin K$ for all $a\in A,n\in\mathbb{Z}\setminus\left\{ 0\right\} $
(as $a+n\alpha\in K\Rightarrow a+n\alpha\in A$ by Step 2, so $n\alpha\in A$,
so $\alpha\in A$ by divisibility of $A$ --- contradicting the assumption).
\end{enumerate}

\smallskip\noindent
\textbf{Step }4: General case.

\begin{claim}There is some $\beta\in G$ such that $\pi\left(\beta\right)=f\left(\pi\left(\alpha\right)\right)$
and $\alpha=\beta\mod kG$ for all $k\in\omega$.\end{claim}
\begin{proof}
By $\omega$-saturation we only need to consider one value of $k$
at a time. Let $g\in G_{0}$ be such that $P_{k}\left(g+\alpha\right)$
holds, then $\pi\left(g+\alpha\right)$ is $k$-divisible as well.
As $g\in A\Rightarrow g+\alpha\in A\left(\alpha\right)\Rightarrow\pi\left(g+\alpha\right)\in A\cap H$
and $f|_{A\cap H}$ is $L_{H}$-elementary, it follows that $f\left(\pi\left(g+\alpha\right)\right)$
is $k$-divisible as well. Take $\beta$ to be $k\beta'-g$ where
$\pi\left(\beta'\right)=\frac{f\left(\pi\left(g+\alpha\right)\right)}{k}$ (recall that $H$ is torsion free).
Now we have $P_{k}\left(g+\beta\right)$ and $\pi\left(\beta\right)=k\pi\left(\beta'\right)-\pi\left(g\right)=f\left(\pi\left(g+\alpha\right)\right)-\pi\left(g\right)=f\left(\pi\left(g\right)\right)+f\left(\pi\left(\alpha\right)\right)-\pi\left(g\right)=f\left(\pi\left(\alpha\right)\right)$
as $f\left(\pi\left(g\right)\right)=\pi\left(f\left(g\right)\right)$
and $f\left(g\right)=g$, so the claim is proved.\end{proof}

We define $F\left(a+n\alpha\right)=f\left(a\right)+n\beta$ and $F|_{A\left(\alpha\right)\cap H=A\cap H}=f|_{A\cap H}$.
\begin{itemize}
\item $F$ is well-defined: If $a+n\alpha=a'+n'\alpha$, then $\left(a-a'\right)+\left(n-n'\right)\alpha=0\in A$,
which implies by the assumption that $n=n'$ and $a=a'$.
\item $F$ is a homomorphism: clear from definition and as $f$ is a homomorphism
on $A$.
\item $F$ preserves $\pi$ (so in particular $K$): $\pi\left(F\left(a+n\alpha\right)\right)=\pi\left(f\left(a\right)+n\beta\right)=\pi\left(f\left(a\right)\right)+n\pi\left(\beta\right)=f\left(\pi\left(a\right)\right)+nf\left(\pi\left(\alpha\right)\right)=f\left(\pi\left(a\right)+n\pi\left(\alpha\right)\right)=f\left(\pi\left(a+n\alpha\right)\right)=F\left(\pi\left(a+n\alpha\right)\right)$.
\item $F$ preserves $P_{k}$: $P_{k}\left(F\left(a+n\alpha\right)\right)\Leftrightarrow P_{k}\left(f\left(a\right)+n\beta\right)\Leftrightarrow$
$P_{k}\left(a+n\beta\right)$ (as $f\left(a\right)=a\mod kG$ because
we have all the representatives in $G_{0}$) $\Leftrightarrow P_{k}\left(a+n\alpha\right)$
(as $\alpha=\beta\mod kG$ by the choice of $\beta$).
\item $F$ preserves $L_{K}$-formulas and $L_{H}$-formulas: as in Step
3.\qedhere
\end{itemize}
\end{proof}
\begin{cor}
$H$ and $K$ are fully stably embedded, i.e. any subset of $H$ (resp. $K$) definable with external parameters is already definable with internal parameters in $L_H$ (resp., $L_K$) --- this follows directly from the elimination of quantifiers.\end{cor}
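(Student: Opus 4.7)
The plan is to derive the corollary directly from Proposition \ref{prop: Elimination of quantifiers in RV}, exploiting the structural assumption that the only function symbol crossing sorts is $\pi : G \to H$ (and in particular no function maps $H$ back to $G$). Given an $L$-formula $\phi(x,y)$ and external parameters $b \in M$, I would first replace $\phi(x,b)$ by a quantifier-free equivalent $\psi(x,b)$ using QE, and then analyze each atomic subformula by sort.

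For the $H$-case the argument is essentially immediate: every $L$-term involving a variable $x_i$ of sort $H$ remains in $H$ (there is no function $H\to G$), and its parameter pieces are either $H$-components of $b$ or $\pi$-images of $G$-components of $b$ — in either case elements of $H$. So each atomic subformula of $\psi(x,b)$ that mentions $x$ is an atomic $L_H$-formula with $H$-parameters; the remaining atomic subformulas evaluate to truth-constants, and the Boolean combination yields the required $L_H$-definition over $H$.

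For the $K$-case the same outline applies, but since $K$-variables sit inside $G$-terms a little more bookkeeping is needed. The atomic subformulas of $\psi(x,b)$ involving $x$ fall into the following cases: $H$-side formulas (independent of $x$, since $\pi(x_i)=0$ for $x_i \in K$); $G$-equations $\sum_i n_i x_i = g$ (solvable in $K^{|x|}$ only when $g \in K$, in which case they are $L_K$-equations with a $K$-parameter); $K(\cdot)$-predicates on $G$-terms in $x$ (depending only on whether the constant part lies in $K$); divisibility predicates $P_n(\sum_i n_i x_i + g)$; and $L_K$-atomic formulas on $G$-terms in $x$ (well-typed only when the $G$-parameters already lie in $K$). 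The main obstacle I anticipate is the uniform handling of the $P_n$-case when $g \notin nG$: here I would argue that the solution set $\{x \in K^{|x|} : P_n(\sum_i n_i x_i + g)\}$ is either empty or a coset of the $L_K$-definable group $\{x \in K^{|x|} : P_n(\sum_i n_i x_i)\}$, and by selecting any internal witness $x^* \in K^{|x|}$ of this coset the original formula becomes equivalent on $K^{|x|}$ to $P_n(\sum_i n_i (x_i - x_i^*))$, which is $L_K$ with parameter $x^*$ from $K$. In every case the atomic subformula is thus $L_K$-definable over $K$, and so is the Boolean combination.
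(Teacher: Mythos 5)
Your argument is correct and follows essentially the same route as the paper, which justifies the corollary in one line as a direct consequence of Proposition \ref{prop: Elimination of quantifiers in RV}; your sort-by-sort analysis of the atomic subformulas (using that no function symbol maps $H$ to $G$, and handling the $P_n$-predicates via a coset/internal-witness argument) is just the standard elaboration of that one-line justification.
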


\section{Reduction from $\RV$ to $k$ and $\Gamma$}

\begin{prop}
\label{prop: concluding inp-minimality}Let $M=\left(G,K,H\right)$
be a structure satisfying the assumptions from the previous section.
Assume moreover that:
\begin{enumerate}
\item $K$ (viewed as an $L_{K}$ structure) and $H$ (viewed as an $L_{H}$
structure) are both $\inp$-minimal;

\item for every $n$, there are only finitely many $x\in G$ for which $nx=0$ (since $H$ is torsion-free, such elements are in fact in $K$).
\end{enumerate}
Then $M$ is $\inp$-minimal.\end{prop}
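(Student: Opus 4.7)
The plan is to assume for contradiction that $M$ admits an $\inp$-pattern of depth $2$ with a singleton variable $x$, and, by exploiting the quantifier elimination of the previous section, to extract from it a depth-$2$ $\inp$-pattern in either $(H,L_H)$ or $(K,L_K)$, contradicting one of the hypotheses. By Ramsey and compactness I arrange the two rows $(b_{r,j})_{j\in\omega}$, $r=1,2$, to be mutually indiscernible, and by Proposition~\ref{prop: Elimination of quantifiers in RV} I may take the formulas $\phi_1,\phi_2$ to be quantifier-free. If $x$ has sort $H$, full stable embeddedness of $H$ (immediate from quantifier elimination) converts the pattern directly into one in $H$, contradicting $\inp$-minimality of $H$; from now on $x$ has sort $G$.

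The next step is to analyze the quantifier-free atoms in $x$ permitted by the language. Each such atom takes one of four forms: $(\mathrm{i})$ an $L_H$-atom on $\pi$-images $m_i\pi(x)+\pi(e_i)$ of $\mathbb{Z}$-linear $G$-terms; $(\mathrm{ii})$ an $L_K$-atom on tuples of terms $m_ix+e_i$, relevant only when all arguments lie in $K$; $(\mathrm{iii})$ a divisibility predicate $P_n(mx+e)$; $(\mathrm{iv})$ an equation $mx+e=0$. Hypothesis (2) makes each $(\mathrm{iv})$-set finite, so after discarding finitely many columns I may assume no $(\mathrm{iv})$-atom is satisfied along the pattern. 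For $(\mathrm{iii})$, indiscernibility of each row fixes the class of $e$ in $G/nG$; since $G_0$ contains representatives of every coset of $nG$ and $G_0 \subseteq A\cap B$, each atom $P_n(mx+e)$ becomes equivalent to $P_n(mx+c)$ for a $j$-independent constant $c \in G_0$, hence to a parameter-free condition on $x$.

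Let $d$ realize the first column of the pattern and set $\eta := \pi(d) \in H$. I split into two cases. In the \emph{generic} case, $\eta$ satisfies none of the equations $m\eta = -\pi(e)$ for pairs $(m,e)$ that appear as coefficient/parameter in the pattern; mutual indiscernibility then forces this to persist throughout the pattern, so every $(\mathrm{ii})$-atom has at least one argument outside $K$ and evaluates trivially. Each $\phi_r(x,b_{r,j})$ therefore collapses to a conjunction of a parameter-free $P_n$-condition with an $L_H$-formula $\bar\psi_r(\pi(x),\pi(b_{r,j}))$; restricting to the (non-empty) parameter-free $P_n$-locus and projecting by $\pi$ yields a depth-$2$ $\inp$-pattern in $H$, contradicting $\inp$-minimality of $H$. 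In the \emph{special} case, some $(m,e)$ from the pattern gives $md+e\in K$; say $e$ is a coordinate of $b_{r_0,0}$. By mutual indiscernibility over $\eta$ the $H$-relation $m\eta = -\pi(e_{r_0,j})$ propagates along row $r_0$, so $md+e_{r_0,j}\in K$ for every $j$. Setting $x' := mx+e_{r_0,0}\in K$, row $r_0$ rewrites as a system of $L_K$-formulas on $x'$ (the new parameters being the $K$-valued differences $e_{r_0,j}-e_{r_0,0}$ together with the remaining coordinates of $b_{r_0,j}$, available in $K$ by stable embeddedness of $K$); the other row, handled by the same analysis, contributes the second row of a depth-$2$ $\inp$-pattern on $x'\in K$, contradicting $\inp$-minimality of $K$.

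The main obstacle is the bookkeeping in the special case: one must check that after the change of variable $x'=mx+e_{r_0,0}$ both the $k_r$-inconsistency of each row and the simultaneous consistency of the single column are preserved, and that the remaining parameters of both rows can be genuinely rewritten as $K$-valued data via stable embeddedness. The finite-torsion hypothesis (2) on $G$ is needed to eliminate the $(\mathrm{iv})$-atoms; the subgroup $G_0$ is what renders $(\mathrm{iii})$-atoms parameter-free; and the torsion-freeness of $H$ allows unique lifting of $\eta$-data to $G$-data when solving the linear relations $m\eta=-\pi(e)$.
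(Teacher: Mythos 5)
Your reduction of the atoms via quantifier elimination matches the paper, but the core of your generic case does not go through as stated: after projecting the two rows to $H$ you assert that you obtain a depth-$2$ $\inp$-pattern in $H$, which requires the \emph{projected} rows to be ($k$-)inconsistent. Inconsistency of the original rows in $G$ does not imply this, because in passing to $H$ you have discarded exactly the data that could make an $H$-solution fail to lift: the inequations $mx+e\neq 0$ (which you drop from the formulas altogether --- ``discarding finitely many columns'' removes nothing, since these occur negatively with parameters varying along the row) and the constraint that a lift must lie in a single $\pi$-fibre and satisfy the $P_l$-condition. This is precisely where the paper does its real work: it argues in the contrapositive, using $\inp$-minimality of $H$ to get a solution $h\in H$ of one projected row, and then \emph{lifts} $h$ to infinitely many pairwise distinct elements $l(\beta+\beta_i)+g$ of the fibre satisfying $P_l(x-g)$, which needs $K$ to be infinite (the case $K$ finite is handled separately at the start, via $a_i\in\acl(\pi(a_i))$, a reduction you never make) and the finite-torsion hypothesis (2) to guarantee distinctness of the lifts; only then does a pigeonhole against the finite set of points excluded by the inequations of finitely many row formulas contradict $k_*$-inconsistency. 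Without this lifting step your contradiction with $\inp$-minimality of $H$ is not established, and hypothesis (2) and the infinitude of $K$ never actually enter your argument where they are needed.

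Your special case has a related problem. When only one row carries an active $L_K$-atom (the paper's Subcase 3.3), fixing $\pi(x)$ and substituting $x'=mx+e_{r_0,0}\in K$ trivializes the $L_H$-part of the other row (it becomes a condition on the single element $\eta$, true or false), so that row does not furnish a second row of a $K$-pattern; the paper instead exploits $\pi(b)=f(a_0)\in\dcl(a_0)$ and splits according to whether the $f(a_i)$ are constant or pairwise distinct, in the latter case again invoking the lifting argument to show the generic row is consistent. Even when both rows carry $L_K$-atoms, the affine change of variable by $m$ is delicate (preimages under multiplication by $m$ involve divisibility and torsion ambiguities, and the two rows may demand incompatible substitutions); the paper avoids this by adding $\pi(b)$ to the base, choosing one element $f$ with $\pi(f)=\pi(b)$ over which both rows remain mutually indiscernible, and translating both rows by the same $f$ before appealing to stable embeddedness and $\inp$-minimality of $K$. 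You flag the bookkeeping as an obstacle but do not resolve it, and in the mixed case the intended $K$-pattern simply does not exist.
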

\begin{proof}

We are working in a saturated extension of $M$. Assume that the conclusion fails, then
we have
an $\inp$-pattern $\phi\left(x,y\right),\phi'\left(x,y'\right),\bar{a}=\left(a_{i}\right),\bar{a}'=\left(a_{i}'\right)$
witnessing this, with $\bar{a}$ and $\bar{a}'$ mutually indiscernible. 
In particular they are mutually indiscernible over $G_{0}\subseteq\acl\left(\emptyset\right)$
which contains representatives of each class of $G/nG$ and all torsion
of $G$, and rows are $k_{*}$-inconsistent. Let $b\models\phi\left(x,a_{0}\right)\land\phi'\left(x,a_{0}'\right)$. 
 It follows from quantifier elimination that $\phi\left(x,a_{i}\right)$ is equivalent
to a disjunction of conjuncts of the form $\theta(t_{i,0}(x),\ldots,t_{i,l-1}(x), \alpha_i) \land \psi\left(\pi\left(x\right),b_{i}\right)\land\chi\left(x,c_{i}\right)\land\rho\left(x,e_{i}\right)$ where:

\begin{itemize}
	\item the $t_{i,j}$ are terms with parameters in $G$, $\alpha_i \in K$ and $\theta$ is an $L_K$-formula;
	\item $\psi$ is an $L_H$-formula and $b_i \in H$;
	\item $\chi\left(x,c_{i}\right)$
is of the form $\bigwedge_{j<k}n_{j}x+c_{i,j}=0\land\bigwedge_{j<k}m_{j}x+d_{i,j}\neq0$
with $c_{i}=\left(c_{i,j}\right)_{j<k}\hat{}\left(d_{i,j}\right)_{j<k}$ from $G$;

\item $\rho\left(x,e_{i}\right)$ is of the form $$\bigwedge_{j<k}P_{m_{j}'}\left(n_{j}'x+e_{i,j}'\right)\land\bigwedge_{j<k}\neg P_{m_{j}''}\left(n_{j}''x+e_{i,j}''\right)$$
with $e_{i}=\left(e_{i,j}'\right)_{j<k}\hat{\,}\left(e_{i,j}''\right)_{j<k}$.

\end{itemize}
Forgetting all but one disjunct satisfied by $b$, we may assume that $\phi(x,a_i)$ is equal to such a conjunction.

Any term $t_{i,j}$ is of the form $n_{i,j}x - g_{i,j}$ and the formula makes sense only when $n_{i,j}x - g_{i,j}\in K$, that is when $\pi(x) = \pi(g_{i,j})/n_{i,j}$. Choose some $h_i$ such that $\pi(h_i)=\pi(g_{i,j})/n_{i,j}$ for some/all $j$. We can then replace $n_{i,j}x - g_{i,j}$ with $n (x-h_{i}) + h'_{i,j}$ with $h'_{i,j}\in K$. Adding $h'_{i,j}$ to $\alpha_i$ and changing the formula $\theta$, we replace $\theta$ by a formula $\theta'(x-h_i,\alpha'_i)$, $\theta'\in L_K$.

Recalling that $G/nG$ is finite for every $n<\omega$, $\rho\left(x,e_{i}\right)$ is equivalent to some finite disjunction
of the form $\bigvee_{i<N}P_{k_{i}}\left(x-g_{i}\right)$ where $g_{i}\in G_{0}$
(so for example to express $\neg P_{k}\left(nx+e\right)$ we have
to say that $x$ belongs to one of the finitely many classes $\mod kG$
satisfying this, and to express $P_{k}\left(nx+e\right)\land P_{l}\left(n'x+e'\right)$
we have to say that $x$ belongs to a certain subset of the classes
$\mod klG$).

Note that $\chi\left(x,c_{0}\right)$ is infinite as $\chi\left(x,c_{0}\right)\land\phi'\left(x,a_{i}\right)$
is consistent for every $i\in\omega$, while $\left\{ \phi'\left(x,a_{i}\right)\right\} _{i\in\omega}$
is $k_{*}$-inconsistent. Thus $\chi\left(x,c_{i}\right)$ can only
be of the form $\bigwedge_{j<k}n_{j}x+c_{i,j}\neq0$ (as every equation
of the form $nx+c=0$ has only finitely many solutions by assumption (2)).

Thus we may assume that $\phi(x,a_i) = \theta(x - h_i, \alpha_i) \land \psi\left(\pi\left(x\right),b_{i}\right)\land\chi\left(x,c_{i}\right)\land P_{l}\left(x-g\right)$ where:

\begin{itemize}
	\item $\alpha_i \in K$ and $\theta$ is an $L_K$-formula, 
	\item $\psi$ is an $L_H$-formula and $b_i \in \Gamma$, 
	\item $\chi\left(x,c_{i}\right) = \left(\bigwedge_{j<k}n_{j}x+c_{i,j}\neq 0\right)$
    \item $l \in \omega, g \in G_{0}$.

\end{itemize}

Similarly, we may assume that $\phi'(x,a'_i) = \theta'(x - h'_i, \alpha'_i) \land \psi' \left(\pi\left(x\right),b'_{i}\right)\land\chi' \left(x,c'_{i}\right)\land P_{l'}\left(x-g' \right)$ with the same properties.

\smallskip \noindent
\textbf{Case 1}: $b\in H$. Then by full stable embeddedness of $H$ we
can replace our array by $\widetilde{\phi}\left(x,\widetilde{a}_{i}\right)$
and $\widetilde{\phi}'\left(x,\widetilde{a}_{i}'\right)$ where $\widetilde{\phi},\widetilde{\phi}'\in L_{H}$
and $\widetilde{a}_{i},\widetilde{a}_{i}'\in H$ are such that $\widetilde{\phi}\left(x,\widetilde{a}_{i}\right)\cap H\left(x\right)=\phi\left(x,a_{i}\right)\cap H\left(x\right)$,
and similarly for $\widetilde{\phi}'$. But this contradicts $\inp$-minimality
of $\left(H,L_{H}\right)$.

\smallskip \noindent
\textbf{Case 2}: $b\in K$. Similarly, by full stable embeddedness of $K$ we can replace our array by $\widetilde{\phi}\left(x,\widetilde{a}_{i}\right)$
and $\widetilde{\phi}'\left(x,\widetilde{a}_{i}'\right)$ where $\widetilde{\phi},\widetilde{\phi}'\in L_{K}$
and $\widetilde{a}_{i},\widetilde{a}_{i}'\in K$ are such that $\widetilde{\phi}\left(x,\widetilde{a}_{i}\right)\cap K\left(x\right)=\phi\left(x,a_{i}\right)\cap K\left(x\right)$,
and similarly for $\widetilde{\phi}'$. But this contradicts $\inp$-minimality
of $\left(K,L_{K}\right)$.

\if 0
\begin{claim}\label{claim: infinite row 1}
One of the partial types $\left\{ \rho\left(x,d_{i}\right)\right\} _{i\in\omega}$
or $\left\{ \rho'\left(x,d_{i}'\right)\right\} _{i\in\omega}$ has
infinitely many solutions in $K\left(x\right)$. \end{claim}
\begin{proof}
As $b\models\rho\left(x,d_{0}\right)\land\rho'\left(x,d_{0}'\right)$,
by $\inp$-minimality of $\left(K,L_{K}\right)$ we find some $b_{0}'\in K$
a solution to one of the rows $\left\{ \rho\left(x,d_{i}\right)\right\} _{i\in\omega}$
or $\left\{ \rho'\left(x,d_{i}'\right)\right\} _{i\in\omega}$. By
Ramsey, mutual indiscernibility and compactness we can find some $b_{0}\in K$
which is still a solution to the same row, and moreover $\bar{a},\bar{a}'$
are mutually indiscernible over $b_{0}$, so we can add it to the
base. Let $\rho_{1}\left(x,d_{i}\right):=\rho\left(x,d_{i}\right)\land x\neq b_{0}$,
and the same for $\rho_{1}'$. As $\rho\left(x,d_{0}\right)\land\phi'\left(x,a_{i}\right)$
is consistent for all $i$ and $\left\{ \phi'\left(x,a_{i}\right)\right\} _{i\in\omega}$
is $k_{*}$-inconsistent, we see that $\rho_{1}\left(x,d_{0}\right)\land\rho_{1}'\left(x,d_{i}'\right)$
is consistent for some $i$, and thus for $i=0$ by mutual indiscernibility
over $b_{0}$. Applying $\inp$-minimality of $\left(K,L_{K}\right)$
again, we find some $b_{1}'$ a solution to one of the rows, after
which we can again find $b_{1}$ a solution to the same row such that
$\bar{a},\bar{a}'$ are mutually indiscernible over $b_{0}b_{1}$.
Repeating this process, we find a sequence of pairwise different elements
$\left(b_{i}\right)_{i\in\omega}$ each of which is a solution to
one of the rows, thus one of the rows has infinitely many solutions,
and the claim is proved.
\end{proof}
Thus without loss of generality we may assume that $K\left(x\right)\cap\left\{ \rho\left(x,d_{i}\right)\right\} _{i\in\omega}$
is infinite. But as the set $\bigvee_{i<k_{*}+1}\left(\bigvee_{j<k}n_{j}x+c_{i,j}=0\right)$
is finite by assumption, we can actually find some $g\models\left\{ \phi\left(x,c_{i}\right)\right\} _{i<k_{*}+1}$
--- a contradiction.
\fi

\smallskip \noindent
\textbf{Case 3}: $b\notin K \cup H$.

\noindent
\textbf{Subcase 3.1} Neither $\theta$ occurs in $\phi$ nor $\theta'$ occurs in $\phi'$ (i.e.~$\phi$ is equivalent to the formula obtained from it by omitting $\theta$).

Then we have $\phi(x,a_i) = \psi\left(\pi\left(x\right),b_{i}\right)\land\chi\left(x,c_{i}\right)\land P_{l}\left(x-g\right)$ and $\phi'(x,a'_i) = \psi' \left(\pi\left(x\right),b'_{i}\right)\land\chi' \left(x,c'_{i}\right)\land P_{l'}\left(x-g' \right)$.

Consider $\widetilde{\psi}(x',b_i) := \psi(x',b_i) \land$ ``$x'-\pi\left(g\right)$ is $l$-divisible'' and $\widetilde{\psi}'(x',b'_i) := \psi(x',b'_i) \land$ ``$x'-\pi\left(g'\right)$ is $l'$-divisible'' --- this is an array in the structure induced on $H$. Note that $\pi\left(b\right)\models \widetilde{\psi}\left(x',b_{0}\right) \land \widetilde{\psi}'\left(x',b_{0}'\right)$.

\noindent
\textbf{Subcase 3.1(a).} $K$ is infinite.

As $H$ is $\inp$-minimal, it follows without loss of generality that
the set $\left\{ \widetilde\psi\left(x',b_{i}\right) : i<\omega \right\}$
has a solution $h$ in $H$.

Say $h-\pi\left(g\right)=l\gamma$. Take $\beta\in G$ such that
$\pi\left(\beta\right)=\gamma$. As $K$ is infinite, there is an
infinite sequence $\left(\beta_{i}\right)_{i\in\omega}$ in $K$ such
that all the differences $\beta_{i}-\beta_{j}$ are pairwise different.
Let $e_{i}'=\beta+\beta_{i}$. Then we still have that $e_{i}'-e_{j}'$
are all pairwise different, and that $\pi\left(e_{i}'\right)=\pi\left(\beta\right)+\pi\left(\beta_{i}\right)=\gamma$.
Note that as by assumption there are only finitely many $l$-torsion
elements in $G$, we may assume that $e_{i}'-e_{j}'$ is not $l$-torsion,
for any $i\neq j$.

Finally, define $e_{i}=le_{i}'+g$. We have:
\begin{itemize}
\item all $e_{i}$'s are pairwise different (as $e_{i}=e_{j}\Rightarrow\left(e_{i}'-e_{j}'\right)$
is $l$-torsion, contradicting the choice of the elements $b_{i}'$).
\item $\pi\left(e_{i}\right)=l\pi\left(e_{i}'\right)+\pi\left(g\right)=l\gamma+\pi\left(g\right)=h$.
\item $P_{l}\left(e_{i}-g\right)$ holds as $e_{i}-g=le_{i}'$.
\end{itemize}
As the set $\bigvee_{i<k_{*}+1}\left(\bigvee_{j<k}n_{j}x+c_{i,j}=0\right)$
is finite, then one of the $e_{i}$'s realizes the first $k_{*}$ elements of
the first row --- a contradiction.

\noindent
\textbf{Subcase 3.1(b).} $K$ is finite.

It follows that all of the fibers of $\pi$ are finite.
\begin{claim}
One of the partial types $\{ \widetilde{\psi}(x',b_i) : i \in \omega \}$ or $\{  \widetilde{\psi}'(x',b'_i) : i \in \omega \}$ has infinitely many solutions in $H$.
\end{claim}
\begin{proof}
By $\inp$-minimality of $H$ we find some $e_{0}'\in H$
a solution to one of the rows $\{ \widetilde{\psi}(x',b_i) : i \in \omega \}$ or $\{  \widetilde{\psi}'(x',b'_i) : i \in \omega \}$. By
Ramsey, mutual indiscernibility and compactness we can find some $e_{0}\in H$
which is still a solution to the same row, and moreover $\bar{b},\bar{b}'$
are mutually indiscernible over $e_{0}$, so we can add it to the
base. Let $\widetilde{\psi}_{1}\left(x',b_{i}\right):=\widetilde{\psi}\left(x',b_{i}\right)\land x'\neq e_{0}$,
and the same for $\widetilde{\psi}_{1}'$.

As by assumption and mutual indiscernibility $ \psi\left(\pi\left(x\right),b_{0}\right) \land P_{l}\left(x-g\right) \land \phi'(x,a_i)$ is consistent for each $i \in \omega$, and $\left\{ \phi'\left(x,a_{i}\right)\right\} _{i\in\omega}$
is $k_{*}$-inconsistent, it follows that for infinitely many $i \in \omega$, we can find pairwise different $f_i \models \psi\left(\pi\left(x\right),b_{0}\right) \land P_{l}\left(x-g\right) \land \phi'(x,a_i)$. As all fibers of $\pi$ are finite, this implies that in fact in $H$ for infinitely many $i$'s we can find pairwise different $f'_i \models \widetilde{\psi}(x',b_0) \land \widetilde{\psi}'(x',b'_i)$. Thus $\widetilde{\psi}_{1}\left(x',b_{0}\right) \land \widetilde{\psi}'_{1}\left(x',b'_{i}\right)$ is consistent for some $i$, and so $\widetilde{\psi}_{1}\left(x',b_{0}\right) \land \widetilde{\psi}'_{1}\left(x',b'_{0}\right)$ is consistent by mutual indiscernibility over $e_0$. 
Repeating this argument, by induction on $s \in \omega$ we can choose $e_s \in H$ such that each $e_{s+1}$ satisfies one of the rows of the array $\{ \widetilde \psi_{s+1}(x',b_i) : i \in \omega\}, \{ \widetilde \psi'_{s+1}(x',b'_i) : i \in \omega\}$, with $\widetilde \psi_{s+1}(x',b_i) := \widetilde \psi_{s}(x',b_i) \land x' \neq e_s$ and $\widetilde \psi'_{s+1}(x',b'_i) := \widetilde \psi'_{s}(x',b'_i) \land x' \neq e_s$.
In particular, all $e_s$ are pairwise distinct, and by pigeonhole infinitely many of them realize the same row, so in particular the same row of the original array.
\end{proof}

So let now $(e_i : i \in \omega)$ be an infinite list of pairwise different solutions of $\{ \widetilde{\psi}(x',b_i) : i \in \omega \}$ in $H$. In particular $e_i - \pi(g) = l \gamma_i$ for some $\gamma_i \in H$ with $(\gamma_i : i \in \omega)$ pairwise different. Let $\beta_i \in G$ be arbitrary such that $\pi(\beta_i) = \gamma_i$. As all fibers of $\pi$ are finite, we may assume that all of $\beta_i$'s are pairwise different as well. Finally, let $f_i := l \beta_i + g$. We have:
\begin{itemize}
\item $(f_i : i \in \omega)$ are pairwise different,
\item $P_l(f_i - g)$ holds for all $i \in \omega$, as $f_i - g = l \beta_i$,
\item $\pi(f_i) = l \pi(\beta_i) + \pi(g) = l \gamma_i + \pi(g) = e_i$.
\end{itemize}

As the set $\bigvee_{i<k_{*}+1}\left(\bigvee_{j<k}n_{j}x+c_{i,j}=0\right)$
is finite, then one of the $f_{i}$'s realizes at least $k_{*}$ elements of
the first row --- a contradiction.

\smallskip \noindent
\textbf{Subcase 3.2} $\theta$ occurs in $\phi$ and $\theta'$ occurs in $\phi'$. I.e.~$\phi$ (respectively, $\phi'$) is not equivalent to the formula obtained from it by omitting $\theta$ (respectively, $\theta'$).

Syntactically, this is only possible if $b - g_0 \in K, b - g'_0 \in K$, hence both $\pi(b) \in \dcl(g_0)$ and $\pi(b) \in \dcl(g'_0)$. By mutual indiscernibility of the rows it follows that $\bar{a}, \bar{a}'$ are mutually indiscernible over $\pi(b)$ and we can add it to the base.

Then by mutual indiscernibility of $\bar{a}, \bar{a}'$ over $\pi(b)$, Ramsey, compactness and applying an automorphism, we can find some $f\in G$ such that $\pi(f) = \pi(b)$ and $\bar{a}, \bar{a}'$ are mutually indiscernible over $f$. So we can add $f$ to the base as well.

Taking $c := b-f$ we have $c \in K$. Translating by $f$, we can consider a new array $\widetilde{\phi}\left(x,\widetilde{a}_{i}\right), \widetilde{\phi}'\left(x,\widetilde{a}_{i}'\right)$ where $\widetilde{\phi}(x, a_i) = \theta(x+f - h_i, \alpha_i) \land \psi\left(\pi\left(x + f \right),b_{i}\right)\land\chi\left(x + f,c_{i}\right)\land P_{l}\left(x + f -g\right)$, and analogously for $\widetilde{\phi}'$. Note that the first column is realized by $c \in K$. By Case 2, we can find some $c'$ realizing, say, the first row of the new array. But then taking $b' := c' + f$ clearly $b'$ realizes the first row of the old array.

\smallskip \noindent
\textbf{Subcase 3.3} $\theta$ occurs in $\phi$, but $\theta'$ does not occur in $\phi'$ (and the symmetric case by permuting the rows).

By assumption $\phi'(x,a'_i) = \psi' \left(\pi\left(x\right),b'_{i}\right)\land\chi' \left(x,c'_{i}\right)\land P_{l'}\left(x-g' \right)$.
As in Subcase 3.1, it follows that $\pi(b) \in \dcl(a_0)$, say $\pi(b) = f(a_0)$ for some $\emptyset$-definable function $f$. We have $b \models \phi'(x,a'_0)$. In particular, $\models \psi'(f(a_0), b'_0) \land \textrm{``}f(a_0) - \pi(g') \textrm{ is } l' \textrm{-divisible''}$.
By mutual indiscernibility of $\bar{a}, \bar{a}'$ it follows that $\models \psi'(f(a_i), b'_j) \land \textrm{``}f(a_i) - \pi(g') \textrm{ is } l' \textrm{-divisible''}$ for all $i,j \in \omega$.

We may also assume that all of $\{ f(a_i) : i \in \omega  \}$ are pairwise different. Otherwise, if $f(a_i) = f(a_j)$ for some $i<j$, by indiscernibility $\pi(b) = f(a_0) = f(a_\infty)$, and so $\bar{a}, \bar{a}'$ are mutually indiscernible over $\pi(b)$ --- and we can conclude as in Subcase 3.2.
It follows that the partial type $\{ \psi'(x', b'_j) \land \textrm{``} x' - \pi(g') \textrm{ is } l' \textrm{-divisible''}  \}$ has infinitely many solutions in $H$, witnessed by $\{ f(a_i) : i\in \omega \}$.
Now this implies that the second row of the original array $\{ \phi'(x,a'_i) : i \in \omega  \}$ is consistent. Namely, if $K$ is infinite, then we conclude as in Case 3.1(a) using one of the solutions, and if $K$ is finite we conclude as in Case 3.1(b).

\end{proof}

\textbf{Proof of Theorem \ref{thm: main}.} Given a valued field $\bar{K}$
satisfying the assumption of Theorem \ref{thm: main}, via the reductions
in Sections 2 and 3 it is enough to demonstrate that $\left(\RV,k,\Gamma\right)$
is $\inp$-minimal. For this it is enough to show that the assumptions
of Proposition \ref{prop: concluding inp-minimality} are satisfied
for $G=\RV$, $K$ a Morleyzation of $k$ and $H$ a Morleyzation
of $\Gamma$. Both $K$ and $H$ are $\inp$-minimal as Morleyzation
obviously preserves $\inp$-minimality, $H$ is torsion-free since
$\Gamma$ is an ordered abelian group.

As $\Gamma$ is an $\inp$-minimal ordered group, it follows from
\cite[Lemma 3.2]{simon2011dp} that $\Gamma/n\Gamma$ is finite for
all $n\in\omega$. Besides, we have that $k^\times / (k^\times)^p$ is finite for all prime $p$ by assumption.
Therefore also $\RV/n\RV$ is finite for all $n$. Finally, $k^{\times}$ has finite $n$-torsion for all $n$.


\subsection*{Remarks and questions}
We do not know if the assumption that $k^\times/ (k^\times)^p$ is finite for all $p$ is in fact necessary. It follows from the proof of \cite[Corollary 4.6]{chernikov2015groups} that if $k$ is an $\inp$-minimal field, then there can be at most one prime $p$ for which $k^\times/ (k^\times)^p$ is infinite.

\begin{problem}
Let $k$ be an $\inp$-minimal field. Is it true that $k^\times / (k^\times)^p$ is finite for all prime $p$? Or at least, can we omit this extra assumption from Theorem \ref{thm: main}?
\end{problem}

The answer is positive for a dp-minimal field by the results of Johnson \cite{johnson2018canonical} (so under the assumptions of Theorem \ref{thm: main}, we have that $\bar{K}$ is dp-minimal if and only if both $k$ and $\Gamma$ are dp-minimal), but the proof relies on the construction of a valuation which doesn't seem to be available in the general $\inp$-minimal case.

Another natural direction is to generalize Theorem \ref{thm: main} from the case of burden $1$ to a general burden calculation.
\begin{problem}
Let $\bar{K}=\left(K,\RV,\rv\right)$ be a Henselian
valued field of equicharacteristic $0$, viewed as a structure in
the $\RV$-language. Is it true that $\bdn(\bar{K}) = \max \{ \bdn(k), \bdn(\Gamma) \}$?\footnote{While the article was under review, this question was answered positively by Touchard \cite{touchard2018burden}.}

\end{problem}

\subsection*{Acknowledgements}
We thank Martin Hils and the anonymous referee for their comments on an earlier version of the paper.

The research leading to this paper has been partially supported by the European Research Council under the European Union's Seventh Framework Programme (FP7/2007-2013)/ERC Grant Agreement No. 291111 and by ValCoMo (ANR-13-BS01-0006).

Chernikov was partially supported by the Fondation Sciences Mathematiques de Paris (ANR-10-LABX-0098), by the NSF (grants DMS-1600796 and DMS-1651321), and by the Sloan Foundation.

Simon was partially supported by NSF (grant DMS-1665491) and the Sloan Foundation.

\bibliography{ultraproductQp}

\end{document}